\documentclass[11pt]{amsart}
\usepackage[T1]{fontenc}
\usepackage{lmodern}
\usepackage{amsmath,amssymb,amsthm,mathtools}
\usepackage{enumitem}
\usepackage{tikz}
\usepackage[hidelinks]{hyperref}
\usepackage{microtype}
\theoremstyle{plain}
\newtheorem{theorem}{Theorem}[section]
\newtheorem{lemma}[theorem]{Lemma}
\newtheorem{proposition}[theorem]{Proposition}
\newtheorem{corollary}[theorem]{Corollary}
\theoremstyle{remark}
\newtheorem{remark}[theorem]{Remark}
\newtheorem{example}[theorem]{Example}

\newcommand{\Spec}{\operatorname{Spec}}
\newcommand{\height}{\operatorname{ht}}

\newcommand{\fm}{\mathfrak m}
\newcommand{\fp}{\mathfrak p}
\newcommand{\fq}{\mathfrak q}
\newcommand{\fa}{\mathfrak a}
\newcommand{\fn}{\mathfrak n}
\newcommand{\fM}{\mathfrak M}
\newcommand{\Q}{\mathbb Q}

\hypersetup{pdftitle={Which posets are spectra of Noetherian rings? The case of height at most one},
  pdfauthor={Xin Li},
  pdfkeywords={prime spectrum, finite poset, bipartite graph, Noetherian ring}}

\title[Posets that are Noetherian spectra]{Which posets are spectra of Noetherian rings? The case of height at most one}
\author{Xin Li}
\address{Shenzhen University of Advanced Technology, Shenzhen, China}
\email{suat25000420@stu.suat-sz.edu.cn}
\date{}
\subjclass[2020]{13E05, 13A15, 06A06}
\keywords{prime spectrum, finite poset, bipartite graph, Noetherian ring}

\begin{document}

\begin{abstract}
A finite poset is the prime spectrum of a Noetherian ring exactly when it has no chain of three elements, that
is, it is the poset of a finite bipartite graph. This answers Conjecture~5.3 of Alkass. The substance is a
construction: minimal elements become curves $y=g_i(x)$, maximal elements points, edges the
incidences, arranged by interpolation. The coordinate ring of the union of the curves, localized at the chosen
points, is essentially of finite type over $\Q$ and of dimension at most one. Dropping finiteness costs two
conditions, both isolated by Colbert: a poset with no chain of three elements is a Noetherian spectrum exactly
when it has finitely many
minimal elements and any two distinct ones have finitely many common upper bounds. A variant keeping the shape of
the ideals of Alkass but changing their exponents shows why that construction produces only
nested neighborhoods.
Constraining the ring destroys most of this freedom: normal rings leave finite disjoint unions of stars, domains a
single star, local rings a point or a finite co-star. In height two the question is open in general, but the conditions conjectured by Wiegand and Wiegand are not sufficient.
\end{abstract}

\maketitle

\section{Introduction}\label{sec:intro}

Kaplansky asked which partially ordered sets are prime spectra of Noetherian rings, as \cite[\S1]{A} records.
Without the Noetherian hypothesis no constraint appears: the spectral spaces are exactly the spectra of rings
\cite{H}, and every finite poset, with the Alexandrov topology, is one of them. With it the constraint is
severe. A Noetherian ring of dimension at least two has infinitely many primes \cite[Lemma~10.61.2]{Stacks}, so
a Noetherian ring with finitely many primes has dimension at most one. Its spectrum therefore has no chain of
three elements: every prime is minimal or maximal, and the comparabilities form the edges of a bipartite graph
whose parts are the minimal and the maximal elements.

Alkass proved that complete bipartite graphs, paths and even cycles, among other families, arise in this
way, and conjectured that every finite bipartite graph is order-isomorphic to the spectrum of a ring essentially of finite type
over a field \cite[Conjecture~5.3]{A}. The conjecture holds, and the condition is exact: a finite poset with a
chain of three elements is not the spectrum of a Noetherian ring, so the hypothesis cannot be weakened.

\begin{theorem}\label{thm:main}
For a finite poset $P$ the following are equivalent.
\begin{enumerate}[label=(\roman*),itemsep=1pt,topsep=2pt]
\item $P$ is isomorphic to $\Spec R$ for some Noetherian ring $R$.
\item $P$ is isomorphic to $\Spec R$ for some ring $R$ essentially of finite type over a field.
\item $P$ has no chain of three elements.
\item $P$ is the poset of a finite bipartite graph (Remark~\ref{rem:dictionary}).
\end{enumerate}
Moreover, in (ii) the field may be taken to be $\Q$, and then $\Spec R$ has dimension at most one, with
dimension one exactly when $P$ has an edge; for $P=\emptyset$ one takes $R=0$.
\end{theorem}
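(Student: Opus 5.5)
The implications (ii)$\Rightarrow$(i) and (iii)$\Leftrightarrow$(iv) are formal. A ring essentially of finite type over a field is a localization of a finitely generated algebra, hence Noetherian by the Hilbert basis theorem. A poset with no chain of three elements is, through the dictionary of Remark~\ref{rem:dictionary}, the same thing as a bipartite graph whose parts are the minimal and the maximal elements; isolated points are elements that are both minimal and maximal.

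For (i)$\Rightarrow$(iii) I use the fact quoted in the introduction. A Noetherian ring of dimension at least two has infinitely many primes \cite[Lemma~10.61.2]{Stacks}. So if $\Spec R$ is finite, then $\dim R\le 1$ and there is no chain $\fp\subsetneq\fq\subsetneq\fm$.

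The substance is (iii)$\Rightarrow$(ii). Let the minimal elements be $1,\dots,n$ and the non-minimal maximal elements be $1,\dots,m$, with edge set $E\subseteq\{1..n\}\times\{1..m\}$; isolated points are counted among the minimal ones. I choose distinct rationals $a_1,\dots,a_m$ and rational values $b_j$. By Lagrange interpolation I choose polynomials $g_i\in\Q[x]$ with $g_i(a_j)=b_j$ exactly when $(i,j)\in E$ and $g_i(a_j)\ne b_j$ otherwise, and with $g_i\neq g_{i'}$ for $i\ne i'$. For instance, fix $b_j=0$, set $h_i=\prod_{(i,j)\in E}(x-a_j)$, and take $g_i=c_i h_i$ or $g_i=h_i+c_i\prod_j(x-a_j)$, choosing the constants generically so that no unwanted zeros occur at the $a_j$ and the $g_i$ are pairwise distinct. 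An isolated minimal point gets a curve passing through no $a_j$; the last step handles it.

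Let $A=\Q[x,y]/\bigl(\prod_i (y-g_i(x))\bigr)$. Its minimal primes are $\fp_i=(y-g_i)$, since the factors are distinct irreducibles. Let $\fm_j=(x-a_j,y-b_j)$, which is maximal. Then $\fp_i\subseteq\fm_j$ if and only if $g_i(a_j)=b_j$, that is, if and only if $(i,j)\in E$.

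Put $S=A\setminus\bigl(\bigcup_j\fm_j\cup\bigcup_{i\ \mathrm{isolated}}\fp_i\bigr)$ and $R=S^{-1}A$. By prime avoidance, the primes of $R$ are exactly the primes of $A$ contained in one of these finitely many primes. Every height-one prime of $A$ is maximal, since $A$ has dimension one. So the primes contained in $\fm_j$ are $\fm_j$ itself and the $\fp_i$ below it. The primes contained in an isolated $\fp_i$ are $\fp_i$ alone, because $\fp_i$ is minimal.

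Two more things need checking. Every non-isolated $\fp_i$ must lie in some $\fm_j$; it does, because it has an edge. An isolated $\fp_i$ must be maximal in $\Spec R$, and it is: any prime strictly above it is some $\fm$ of $A$ that is not among the $\fm_j$ and not contained in the isolated $\fp$'s, so it has been removed. Hence $\Spec R\cong P$, $R$ is essentially of finite type over $\Q$, and $\dim R=1$ exactly when $E\ne\emptyset$, since only then does a chain $\fp_i\subsetneq\fm_j$ survive. For $P=\emptyset$, take $R=0$.

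The main obstacle is making prime avoidance give precisely this set of primes. It must not introduce extra primes, and it must not fuse an isolated minimal prime with anything. Writing $R$ as $S^{-1}A$ with $S$ the complement of a finite union of primes makes both points clean, because $\Spec S^{-1}A$ is then exactly the set of primes contained in one of the chosen primes. The interpolation step is routine but needs care with genericity, so that there are no accidental incidences and the curves stay distinct.
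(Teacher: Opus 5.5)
Your proposal is correct and is essentially the paper's proof. You get (i)$\Rightarrow$(iii) from the dimension count, and (iii)$\Rightarrow$(ii) from curves $y=g_i(x)$ through rational points, with $R$ the localization of $\Q[x,y]/\bigl(\prod_i(y-g_i)\bigr)$ at the complement of a finite union of primes. Your two variations are harmless simplifications: taking $b_j=0$ and $g_i=c_i\prod_{j\in N(i)}(x-a_j)$ with distinct nonzero $c_i$ needs no interpolation or genericity, and treating each isolated element as a line $y=c_i$ whose prime you add to the union defining $S$ (Lemma~\ref{lem:localization} allows non-maximal primes there) replaces the paper's product with $\Q^{|A_0|}$.
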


\begin{remark}[the dictionary]\label{rem:dictionary}
Let $B=(X,Y;E)$ be a finite bipartite graph with parts $X$ and $Y$. Its \emph{poset} has elements
$X\sqcup Y$, with $x<y$ for each edge $xy\in E$ and no other comparabilities. Conversely, every finite poset
$P$ with no chain of three elements is the disjoint union of the poset of the bipartite graph with parts
$X=\min P\setminus\max P$ and $Y=\max P\setminus\min P$, whose edges are the comparabilities, and the
antichain $A_0=\min P\cap\max P$ of isolated elements. This is a bijection
between such posets and pairs consisting of a bipartite graph without isolated vertices and a finite antichain,
so (iii) and (iv) of Theorem~\ref{thm:main} say the same thing.

For cardinals $m,n$ we write $K_{m,n}$ for the poset with elements $x_i$ $(i<m)$ and $y_j$ $(j<n)$, ordered
by $x_i<y_j$ for all $i,j$ and nothing else; it is the poset of the complete bipartite graph with parts of
sizes $m$ and $n$, and $\infty$ in a subscript means countably infinite. We call $K_{1,t}$ a \emph{star} and
$K_{s,1}$ a \emph{co-star}.
\end{remark}

The implication (ii)$\Rightarrow$(i) is immediate and (iii)$\Leftrightarrow$(iv) is Remark~\ref{rem:dictionary},
so the content is (iii)$\Rightarrow$(ii), together with the reverse implication (i)$\Rightarrow$(iii), the dimension count of
the first paragraph. This paper gives two constructions for (iii)$\Rightarrow$(ii), one geometric in
Section~\ref{sec:geometric} and one arithmetic in Section~\ref{sec:arithmetic}. The geometric one appears to be
new; both constructions give rings essentially of finite type over $\Q$. For the finite theorem, even the bare
existence statement is a theorem of Colbert \cite{C18} in dimension at most one, as
Section~\ref{sec:relation} explains.

The idea for (iii)$\Rightarrow$(ii) is to build the graph from geometry. Put the maximal elements at
points $P_j=(a_j,b_j)$ of the plane, with distinct $a_j$, and the minimal elements on curves
$C_i=\{y=g_i(x)\}$, and let the edges be the incidences. A curve of this form is irreducible without further
hypotheses, since $y-g_i(x)$ is monic of degree one in $y$. Interpolation lets us choose the value of $g_i$ at every $a_j$, so the incidences can be
prescribed at will. Take the coordinate ring of the union of the
curves, localized at the complement of the union of the maximal ideals of the chosen points: its only primes are the curves and the points. Example~
\ref{ex:two} works this out on four elements.

Dropping finiteness costs two conditions, and no more, the first two in Colbert's definition of a K-poset \cite{C18}: a poset of height at most one, that is, with no chain of
three elements, is a Noetherian spectrum exactly when it has finitely many minimal elements and any two
distinct ones have only finitely many common upper bounds (Theorem~\ref{thm:infinite}). Neither condition
follows from the other: $K_{\infty,1}$ has infinitely many minimal elements below a single maximal one, and
$K_{2,\infty}$, in which countably many maximal elements lie above the same two minimal ones, fails the second.

The ring constrains the poset as well. A poset with no chain of three elements is the spectrum of a Noetherian
normal ring if and only if it is a disjoint union of finitely many stars $K_{1,t}$ with $t$ an arbitrary
cardinal (Theorem~\ref{thm:normal}); it is the spectrum of a Noetherian domain (equivalently, of a Noetherian
unique factorization domain) if and only if it has a single minimal element (Theorem~\ref{thm:ufd}); and it is
the spectrum of a Noetherian local ring if and only if it is a single point or a finite co-star $K_{s,1}$
(Theorem~\ref{thm:local}). So the freedom of Theorem~\ref{thm:main} disappears in all three directions.

Sections~\ref{sec:prelim} to~\ref{sec:converse} prove Theorem~\ref{thm:main}. Sections~\ref{sec:infinite} and~\ref{sec:properties}
can be read independently of one another and of Section~\ref{sec:converse}:
they characterize the posets of height at most one and ask which of them stay when the ring is required to be
normal, a domain, a unique factorization domain or a local ring. Section~\ref{sec:heighttwo} takes up height two: it gives a criterion in the countable case, and it
refutes Conjecture~1.4 of \cite{WW}, which had proposed different conditions for the same spectra.
Section~\ref{sec:relation} compares the result with the literature.

\section{Preliminaries}\label{sec:prelim}

Rings are commutative with $1$, the zero ring is allowed and counts as reduced and normal, and $\Spec R$ is
the set of prime ideals of $R$ ordered by inclusion.\footnote{The term \emph{Noetherian spectrum} also names a
topological property, that every open set of a space be quasi-compact \cite{OP}. In this paper a Noetherian
spectrum is the spectrum of a Noetherian ring.} A ring is \emph{semilocal} when it has finitely many maximal
ideals, and \emph{essentially of finite type} over a field $k$ when it is a localization of a finitely generated
$k$-algebra. In this paper a poset has height at most one when it has no chain of three elements, and we use the two
descriptions interchangeably; the \emph{height} of an element is the supremum of the lengths of the chains of
elements strictly below it, so the minimal elements are those of height zero.

We use three standard facts. First, prime avoidance: a prime ideal contained in a finite union of prime ideals
is contained in one of them \cite[Proposition~1.11]{AM}. Second, Krull's principal ideal theorem: in a Noetherian
ring a prime minimal over a principal ideal has height at most one, and height zero is the same as minimal
\cite[Corollary~11.17]{AM}, \cite[Theorem~13.5]{Ma}. Third, used in Sections~\ref{sec:geometric}
and~\ref{sec:arithmetic}: for a field $k$ and a
nonconstant $f\in k[x,y]$, the ring $k[x,y]/(f)$ has dimension one, and if $f$ is prime it is a
one-dimensional domain, so each of its nonzero primes is maximal. When $f=y-g(x)$ is linear in $y$,
evaluation of $y$ at $g(x)$ gives $k[x,y]/(f)\cong k[x]$.

The next lemma is \cite[Lemma~3.1]{A}, in the form we need.

\begin{lemma}\label{lem:localization}
Let $A$ be a ring, $t\ge1$, $n\ge1$, and let $p_1,\dots,p_n$ and $\fm_1,\dots,\fm_t$ be prime ideals of $A$
such that each $\fm_j$ contains some $p_i$. Put $\fa=p_1\cdots p_n$ and let
$S=(A/\fa)\setminus\bigcup_j\overline{\fm_j}$, where the bar denotes the image in $A/\fa$. Then
$\Spec\bigl(S^{-1}(A/\fa)\bigr)$ is order-isomorphic to the set of primes $q$ of $A$ with
$p_i\subseteq q\subseteq\fm_j$ for some $i$ and $j$, ordered by inclusion.
\end{lemma}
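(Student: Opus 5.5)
We identify $\Spec\bigl(S^{-1}(A/\fa)\bigr)$ with a subset of $\Spec A$ by composing the two standard order-embeddings. First, $\Spec(A/\fa)$ is order-isomorphic to the set of primes $q$ of $A$ containing $\fa$, via $q\mapsto q/\fa$. Second, for a multiplicative set $S$ in a ring $B$, $\Spec(S^{-1}B)$ is order-isomorphic to the set of primes of $B$ disjoint from $S$, via contraction and extension. So $\Spec\bigl(S^{-1}(A/\fa)\bigr)$ is order-isomorphic to the set of primes $q\supseteq\fa$ of $A$ with $\bar q\cap S=\emptyset$. Both bijections preserve and reflect inclusion, so the composite is an order-isomorphism onto its image. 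It remains to show that this image is exactly the set described in the statement.

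We first check that $S$ is multiplicative. The set $S$ is the complement in $A/\fa$ of a union of primes $\overline{\fm_j}$. Each $\overline{\fm_j}$ is prime because $\fm_j\supseteq p_i\supseteq\fa$ for some $i$. Hence $1\in S$ and $S$ is closed under products. (Possibly $0\in S$, but only if some $\fm_j$ fails to contain $\fa$, which does not happen here.)

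We now identify the image in two steps.

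\emph{The condition $q\supseteq\fa$.} A prime contains the product $p_1\cdots p_n$ if and only if it contains some $p_i$. One direction is immediate. For the other, if $q\not\supseteq p_i$ for every $i$, choose $x_i\in p_i\setminus q$; then $x_1\cdots x_n\in\fa\setminus q$.

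\emph{The condition $\bar q\cap S=\emptyset$.} This means $\bar q\subseteq\bigcup_j\overline{\fm_j}$. By prime avoidance (finitely many primes, since $t$ is finite), this holds if and only if $\bar q\subseteq\overline{\fm_j}$ for some $j$. Since $\fm_j\supseteq\fa$, the ideal correspondence in $A/\fa$ turns this into $q\subseteq\fm_j$.

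Combining the two steps, the image is precisely $\{q: p_i\subseteq q\subseteq\fm_j \text{ for some } i,j\}$, ordered by inclusion.

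The main point to be careful about is the use of prime avoidance in the second step. It needs the $\overline{\fm_j}$ to be genuine primes of $A/\fa$, which is where the hypothesis that each $\fm_j$ contains some $p_i$ enters. The rest is bookkeeping of the correspondence theorems.
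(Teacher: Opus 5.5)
Your proof is correct and follows the paper's argument essentially step for step: the reduction of $q\supseteq\fa$ to $q\supseteq p_i$ for some $i$, the check that each $\overline{\fm_j}$ is a proper prime so that $S$ is multiplicative, and then prime avoidance followed by the ideal correspondence to turn $\bar q\subseteq\overline{\fm_j}$ into $q\subseteq\fm_j$. One harmless slip: your parenthetical about $0\in S$ is off, because $0$ always lies in the ideal $\overline{\fm_j}$. Without the hypothesis the real dangers are that $\overline{\fm_j}$ fails to be a proper prime, so that $1\notin S$, or that $\bar q\subseteq\overline{\fm_j}$ no longer matches $q\subseteq\fm_j$, as in Remark~\ref{rem:hypothesis}.
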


\begin{proof}
A prime $q$ of $A$ contains $\fa$ if and only if it contains some $p_i$: if $\fa\subseteq q$ but
$p_i\not\subseteq q$ for every $i$, then choosing $x_i\in p_i\setminus q$ gives
$x_1\cdots x_n\in\fa\setminus q$. So $\Spec(A/\fa)$ is the set of primes containing some $p_i$, ordered by
inclusion.

Each $\overline{\fm_j}$ is a proper prime of $A/\fa$. It is prime because
$(A/\fa)/(\fm_j/\fa)\cong A/\fm_j$ is a domain, and proper because $\fa\subseteq p_i\subseteq\fm_j$ for a
suitable $i$. Hence $1\in S$, and $S$ is multiplicative: if $xy$ lies in the union, it lies in some
$\overline{\fm_j}$, and then $x$ or $y$ does.

A prime $\bar q$ of $A/\fa$ is disjoint from $S$ exactly when it is contained in the union of the
$\overline{\fm_j}$, and by prime avoidance exactly when it is contained in one of them. Localizing at $S$
therefore keeps precisely the primes of $A/\fa$ that lie in some $\overline{\fm_j}$, and the
correspondence preserves inclusions \cite[Proposition~3.11]{AM}.
\end{proof}

\begin{remark}\label{rem:hypothesis}
The hypothesis that each $\fm_j$ contains some $p_i$ gives $\fa\subseteq\fm_j$, so that $\overline{\fm_j}$ is
a proper prime and a prime $q\supseteq\fa$ satisfies $\bar q\subseteq\overline{\fm_j}$ exactly when
$q\subseteq\fm_j$; this is what makes the two sides of the lemma match. Without it they can differ: for
$A=k[x,y]$, $\fa=(y)$ and $\fm=(x)$ the image $\overline{\fm}=(x)$ is a proper prime of $A/\fa\cong k[x]$, and
the localization at the complement of $(x)$ has the two primes $(0)$ and $(x)$, whereas no prime $q$ satisfies
$(y)\subseteq q\subseteq(x)$.
\end{remark}

\section{The geometric construction}\label{sec:geometric}

Throughout this section $P$ is a finite poset with no chain of three elements. By
Remark~\ref{rem:dictionary}, $P$ is the disjoint union of the poset $(X\sqcup Y;E)$ of a bipartite graph
without isolated vertices and an antichain $A_0$, where $E$ is the set of pairs $x<y$.

An element of $A_0$ is incomparable with every other element, so it contributes a point of its own:
$\Spec(\Q^m)$ is a discrete set of $m$ points, and spectra of products of nonzero rings are disjoint unions
\cite[Lemma~3.9]{A}, \cite[Lemma~10.21.2]{Stacks}. If $(X\sqcup Y;E)$ is empty, $\Q^{|A_0|}$ realizes the
antichain $P$, of dimension $0$ unless $P=\emptyset$. Otherwise let $R_0$ realize $(X\sqcup Y;E)$ and take
$R=R_0$, or $R_0\times\Q^{|A_0|}$ when $A_0\ne\emptyset$; then $\dim R=\dim R_0$ and $R$ is
essentially of finite type over $\Q$.

It remains to realize a bipartite graph without isolated vertices, so from now on $X$ and $Y$ are nonempty
and no vertex is isolated. Write $X=\{x_1,\dots,x_n\}$, $Y=\{y_1,\dots,y_t\}$ and
$N(i)=\{j:(i,j)\in E\}$, the \emph{neighborhood} of $x_i$; by hypothesis $n,t\ge1$, and $N(i)$ and
$M(j)=\{i:j\in N(i)\}$ are nonempty for all $i$ and $j$.

\subsection*{The data}

Choose distinct rationals $a_1,\dots,a_t$ and distinct rationals $b_1,\dots,b_t$, and put
$P_j=(a_j,b_j)$ and $\fm_j=(x-a_j,y-b_j)$. Then $\fm_j$ is the kernel of evaluation at $P_j$, hence maximal,
because evaluation is onto $\Q$.

Let $h(x)=\prod_j(x-a_j)$. For each $i$, let $g_i^0$ be the interpolation polynomial of degree $<t$ with
$g_i^0(a_j)=b_j$ for $j\in N(i)$ and $g_i^0(a_j)=b_j+1$ otherwise, and set
\[
g_i=g_i^0+c_i h,\qquad f_i=y-g_i(x),\qquad p_i=(f_i),
\]
where $c_1,\dots,c_n$ are distinct nonzero rationals; over $\Q$ one may take $c_i=i$. These constants play
no role in the incidences and serve only to keep the curves distinct when two minimal elements have the
same neighbors.

\begin{lemma}\label{lem:data}
With this notation:
\begin{enumerate}[label=(\roman*),itemsep=1pt,topsep=2pt]
\item $g_i(a_j)=b_j$ if and only if $j\in N(i)$;
\item the $g_i$ are pairwise distinct, even when $N(i)=N(i')$;
\item $f_i$ is irreducible, so $p_i$ is prime;
\item the $p_i$ are distinct and pairwise incomparable, and $p_i\subseteq\fm_j$ if and only if
$(i,j)\in E$.
\end{enumerate}
\end{lemma}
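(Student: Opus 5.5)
The four parts are elementary, and I would verify them in order, each using the previous ones.

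For (i), note that $h(a_j)=0$ for every $j$, so $g_i(a_j)=g_i^0(a_j)$. By construction this value is $b_j$ when $j\in N(i)$ and $b_j+1\neq b_j$ otherwise.

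For (ii), suppose $i\ne i'$. If $g_i^0\ne g_{i'}^0$, I compare degrees. Both $g_i^0$ and $g_{i'}^0$ have degree $<t$, while $c_ih$ has degree exactly $t$ because $c_i\ne0$. Hence $g_i-g_{i'}=(g_i^0-g_{i'}^0)+(c_i-c_{i'})h$. If $c_i\ne c_{i'}$, the coefficient of $x^t$ in this difference is $c_i-c_{i'}\neq0$, so $g_i\ne g_{i'}$. The $c_i$ are distinct by hypothesis, so this case always applies, whether or not $N(i)=N(i')$.

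For (iii), $f_i=y-g_i(x)$ is monic of degree one in $y$ over $\Q[x]$. Any factorization $f_i=uv$ in $\Q[x][y]$ therefore has one factor of $y$-degree zero, say $u\in\Q[x]$. Since $u$ divides the leading coefficient $1$ in $y$, it is a unit. So $f_i$ is irreducible, and $\Q[x,y]$ is a UFD, so $(f_i)$ is prime. Equivalently, by the preliminaries, $\Q[x,y]/(f_i)\cong\Q[x]$ is a domain.

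For (iv), I first show distinctness and incomparability. Suppose $p_i\subseteq p_{i'}$. Then $f_{i'}\mid f_i$, and both are monic of degree one in $y$, so the quotient lies in $\Q[x]$ and, comparing $y$-coefficients, equals $1$. This gives $f_i=f_{i'}$, hence $g_i=g_{i'}$, and (ii) forces $i=i'$.

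For the incidence statement, note that $f_i\in\fm_j$ exactly when $f_i$ vanishes at $P_j$, because $\fm_j$ is the kernel of evaluation at $P_j$. Now $f_i(P_j)=b_j-g_i(a_j)$, so by (i) this holds iff $j\in N(i)$, that is, iff $(i,j)\in E$. Finally, $p_i\subseteq\fm_j$ iff $f_i\in\fm_j$.

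None of these steps is a real obstacle. The only point needing care is (ii), which relies on the degree bound $\deg g_i^0<t=\deg h$ to separate the two summands.
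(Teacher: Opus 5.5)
Your proof is correct and matches the paper's argument part by part: $h(a_j)=0$ for (i), the $x^t$-coefficient $c_i$ for (ii), the $y$-degree argument over $\Q[x]$ for (iii), and divisibility plus evaluation at $P_j$ for (iv). The only blemish is the case hypothesis ``If $g_i^0\ne g_{i'}^0$'' at the start of (ii); it is superfluous, because your comparison of $x^t$-coefficients works whether or not $g_i^0=g_{i'}^0$.
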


\begin{proof}
(i) Since $h$ vanishes at every $a_j$, $g_i(a_j)=g_i^0(a_j)$, which equals $b_j$ exactly for $j\in N(i)$.

(ii) The polynomial $g_i^0$ has degree $<t$ and $h$ is monic of degree $t$, so the coefficient of $x^t$ in
$g_i$ is $c_i$.

(iii) The polynomial $f_i$ is primitive of $y$-degree one over the UFD $\Q[x]$. If $f_i=uv$ with neither
factor a unit, the $y$-degrees add to one, so one factor lies in $\Q[x]$ and divides both coefficients $1$
and $g_i$; hence it is a constant. So $f_i$ is irreducible, and since $\Q[x,y]$ is a UFD, $p_i$ is prime.

(iv) If $p_i\subseteq p_{i'}$, then $f_i=f_{i'}w$. Comparing $y$-degrees gives $w\in\Q[x]$, and comparing
the coefficients of $y$ gives $w=1$. So $f_i=f_{i'}$ and $g_i=g_{i'}$, which by (ii) forces $i=i'$.
Finally $p_i\subseteq\fm_j$ if and only if $f_i$ vanishes at $P_j$, that is, if and only if
$g_i(a_j)=b_j$, which by (i) means $j\in N(i)$.
\end{proof}

\subsection*{The ring}

Set $\fa=\prod_i p_i=(f_1\cdots f_n)$ and let $R=S^{-1}(\Q[x,y]/\fa)$, where $S$ is the complement of the
union of the images of the $\fm_j$.

\begin{lemma}\label{lem:spectrum}
The primes of $R$ are the images of $p_1,\dots,p_n,\fm_1,\dots,\fm_t$, with
$p_i\subseteq\fm_j$ exactly when $(i,j)\in E$ and with no other inclusion among them. In particular
$p_i\prec\fm_j$ is a cover for every edge, and $\Spec R\cong(X\sqcup Y;E)$.
\end{lemma}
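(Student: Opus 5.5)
We apply Lemma~\ref{lem:localization} with $A=\Q[x,y]$, the primes $p_1,\dots,p_n$ of Lemma~\ref{lem:data}(iii), and the maximal ideals $\fm_1,\dots,\fm_t$. The hypothesis of that lemma holds because no vertex is isolated: each $M(j)$ is nonempty, so some $i$ has $(i,j)\in E$, and then $p_i\subseteq\fm_j$ by Lemma~\ref{lem:data}(iv). The lemma then identifies $\Spec R$, as a poset, with the set of primes $q$ of $\Q[x,y]$ satisfying $p_i\subseteq q\subseteq\fm_j$ for some $i,j$. It remains to determine this set exactly.

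Fix such a pair $(i,j)$ and a prime $q$ with $p_i\subseteq q\subseteq\fm_j$. Then $q/p_i$ is a prime of $\Q[x,y]/(f_i)\cong\Q[x]$. This ring is a one-dimensional domain, by the third standard fact of Section~\ref{sec:prelim}, so $q/p_i$ is either zero or maximal. If it is zero, $q=p_i$. If it is maximal, $q$ is a maximal ideal of $\Q[x,y]$ contained in $\fm_j$, so $q=\fm_j$. Hence every prime in the set is one of $p_1,\dots,p_n,\fm_1,\dots,\fm_t$. Conversely, each $p_i$ lies in the set, since $N(i)$ is nonempty. Each $\fm_j$ also lies in the set, since $M(j)$ is nonempty.

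Next we record the inclusions among these primes. The $p_i$ are distinct and pairwise incomparable by Lemma~\ref{lem:data}(iv). The $\fm_j$ are distinct maximal ideals, because the points $P_j$ are distinct (the $a_j$ are distinct). No $\fm_j$ lies in any $p_i$: the quotient $\Q[x,y]/p_i\cong\Q[x]$ is not a field, so $p_i$ is not maximal, and $\fm_j\subseteq p_i$ would force $\fm_j=p_i$. Finally, $p_i\subseteq\fm_j$ holds exactly when $(i,j)\in E$, again by Lemma~\ref{lem:data}(iv). The correspondence in Lemma~\ref{lem:localization} preserves and reflects inclusion, so the images in $R$ satisfy the same relations, and the map $x_i\mapsto p_i$, $y_j\mapsto\fm_j$ is an isomorphism $\Spec R\cong(X\sqcup Y;E)$.

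For the cover claim, take an edge $(i,j)$ and suppose $p_i\subseteq q\subseteq\fm_j$ with $q$ prime. The argument of the second paragraph gives $q=p_i$ or $q=\fm_j$, so $p_i\prec\fm_j$ is a cover. Most of this is bookkeeping. The only step needing care is the second paragraph, namely that no prime of $\Q[x,y]$ other than $p_i$ and $\fm_j$ lies between them. It is settled by $\Q[x,y]/(f_i)\cong\Q[x]$ having dimension one. This is also why primes such as the ideals of other points on the curve $C_i$ do not appear: they are not contained in any $\fm_j$.
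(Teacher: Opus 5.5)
Your proof is correct and follows the paper's route. Lemma~\ref{lem:localization} reduces $\Spec R$ to the primes of $\Q[x,y]$ lying between some $p_i$ and $\fm_j$; since $\Q[x,y]/(f_i)\cong\Q[x]$ is one-dimensional, such a prime is $p_i$ or $\fm_j$, and the remaining inclusions come from Lemma~\ref{lem:data}(iv) and the maximality of the $\fm_j$. Your explicit check of the hypothesis of Lemma~\ref{lem:localization}, and skipping the paper's unneeded case $q=p_{i'}$, are cosmetic differences.
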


\begin{proof}
By Lemma~\ref{lem:localization}, $\Spec R$ is the set of primes $q$ of $\Q[x,y]$ with
$p_i\subseteq q\subseteq\fm_j$ for some $i,j$. Every $p_i$ and every $\fm_j$ is such a prime, because every
vertex has a neighbor.

Let $q$ be such a prime, say $p_i\subseteq q\subseteq\fm_j$. If $q=p_i$ we are done; if $q=p_{i'}$ with
$i'\ne i$, then $p_i\subseteq p_{i'}$, which Lemma~\ref{lem:data}(iv) forbids. Otherwise $q/p_i$ is a
nonzero prime of $\Q[x,y]/(f_i)\cong\Q[x]$, hence maximal, and it is contained in the prime
$\fm_j/p_i$ of that ring, hence equal to it. So $q=\fm_j$.

This shows that the primes of $R$ are exactly the listed ones, and that the interval between $p_i$ and
$\fm_j$ has two elements when $(i,j)\in E$. The remaining claims are Lemma~\ref{lem:data}(iv), the
distinctness and maximality of the $\fm_j$, and the fact that a maximal ideal cannot be contained in the
non-maximal ideal $p_i$.
\end{proof}

The ring $R$ is a localization of $\Q[x,y]/(f_1\cdots f_n)$, so $\dim R\le1$, and $\dim R=1$ as soon as
$P$ has an edge, since an edge gives a chain $p_i\subsetneq\fm_j$. For a nonempty poset without edges the
reduction above gives $\Q^{|P|}$, of dimension $0$.

\begin{example}\label{ex:two}
Take $a_1=1,b_1=3,a_2=2,b_2=6$, so $\fm_1=(x-1,y-3)$, $\fm_2=(x-2,y-6)$ and $h=(x-1)(x-2)$.

If both minimal elements are adjacent to both points, then $g_1^0=g_2^0=3x$, and the two curves are
$g_1=x^2+2$ and $g_2=2x^2-3x+4$. Both pass through both points, so $\Spec R\cong K_{2,2}$; the constants
$c_i$ are what keep the curves apart here, the two neighborhoods being equal.

If instead $x_1$ is adjacent to both points and $x_2$ only to $P_2$, then $g_1^0=3x$ as before and
$g_2^0=2x+2$, so $g_2=2x^2-4x+6$. Now $g_2(1)=4\ne3$ and $g_2(2)=6$, so the second curve passes through
$P_2$ alone, and $\Spec R$ is the path $\fm_1>p_1<\fm_2>p_2$ (Figure~\ref{fig:path}).
\end{example}

\begin{figure}[t]
\centering
\begin{tikzpicture}[x=1.5cm,y=0.48cm]
  \draw[thick,domain=0.6:2.45,smooth,variable=\x] plot ({\x},{\x*\x+2});
  \draw[thick,domain=0.6:2.45,smooth,variable=\x] plot ({\x},{2*\x*\x-4*\x+6});
  \node[left] at (0.6,2.4) {$C_1$};
  \node[left] at (0.6,4.3) {$C_2$};
  \fill (1,3) circle (2.8pt) node[below right] {$P_1$};
  \fill (2,6) circle (2.8pt) node[below right] {$P_2$};
\end{tikzpicture}
\hspace{1.6cm}
\begin{tikzpicture}[scale=0.62]
  \node (p1) at (0,0) {$p_1$};
  \node (p2) at (2.0,0) {$p_2$};
  \node (m1) at (0,1.8) {$\fm_1$};
  \node (m2) at (2.0,1.8) {$\fm_2$};
  \draw (p1) -- (m1); \draw (p1) -- (m2); \draw (p2) -- (m2);
\end{tikzpicture}
\caption{The second case of Example~\ref{ex:two}, drawn from $g_1=x^2+2$ and $g_2=2x^2-4x+6$: $C_1$ passes
through both points, $C_2$ only through $P_2$. In $\Spec R$ the element $p_1$ therefore lies below both maximal
elements, and $p_2$ only below $\fm_2$.}
\label{fig:path}
\end{figure}

\subsection*{The geometric meaning}

The construction is the localization of $\Q[x,y]/(f_1\cdots f_n)$ that keeps the points $P_j$, that is, the
semilocal ring of the union of the curves at those points; and nothing in it needs the curves to be graphs of
polynomials.

\begin{theorem}\label{thm:geometric}
Let $C_1,\dots,C_n$ be distinct irreducible plane curves over $\Q$ with pairwise non-associate defining
polynomials $f_i$, let
$P_1,\dots,P_t$ be distinct rational points, and assume that every curve contains at least one of the
points and that every point lies on at least one curve. Put $R=S^{-1}\bigl(\Q[x,y]/(f_1\cdots f_n)\bigr)$,
where $S$ is the complement of the union of the images of the maximal ideals of the points $P_j$. Then
$\Spec R$ is the incidence poset of the configuration.
\end{theorem}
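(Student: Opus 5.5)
The plan is to repeat the proof of Lemma~\ref{lem:spectrum} in this generality, with Lemma~\ref{lem:localization} doing the main work. Put $A=\Q[x,y]$, $p_i=(f_i)$, and let $\fm_j$ be the maximal ideal of $P_j=(a_j,b_j)$, namely $\fm_j=(x-a_j,y-b_j)$. Each $f_i$ is irreducible in the UFD $A$, so $p_i$ is prime. Each $\fm_j$ is maximal, being the kernel of evaluation at a rational point. Incidence is containment: $P_j\in C_i$ means $f_i(P_j)=0$, which says exactly that $f_i\in\fm_j$, that is, $p_i\subseteq\fm_j$. The hypothesis that every point lies on a curve says that each $\fm_j$ contains some $p_i$. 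So Lemma~\ref{lem:localization} applies with $\fa=p_1\cdots p_n=(f_1\cdots f_n)$. It identifies $\Spec R$ with the set of primes $q$ of $A$ such that $p_i\subseteq q\subseteq\fm_j$ for some $i,j$, ordered by inclusion.

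First I check that every listed prime occurs in this set. Each $p_i$ occurs because every curve contains a point. Each $\fm_j$ occurs because every point lies on a curve.

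Next I show there are no other primes. Take a prime $q$ with $p_i\subseteq q\subseteq\fm_j$ and $q\ne p_i$. Then $q/p_i$ is a nonzero prime of the domain $A/(f_i)$. Since $f_i$ is nonconstant, this domain has dimension one (the third standard fact of Section~\ref{sec:prelim}). So $q/p_i$ is maximal. It lies inside the proper ideal $\fm_j/p_i$, hence equals it, and $q=\fm_j$. This is the only place where the argument differs from Lemma~\ref{lem:spectrum}: there $A/(f_i)\cong\Q[x]$, and here I use only that $A/(f_i)$ is a one-dimensional domain.

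Finally I determine the order among the listed primes.
\begin{itemize}
\item The $p_i$ are pairwise incomparable. If $p_i\subseteq p_{i'}$, then $f_{i'}\mid f_i$. Both are irreducible, so they are associate, which forces $i=i'$ by hypothesis.
\item The $\fm_j$ are distinct because the points are distinct, and they are pairwise incomparable because they are maximal.
\item No $\fm_j\subseteq p_i$, since $p_i$ has height one and is not maximal.
\item The remaining relations are $p_i\subseteq\fm_j$, which hold exactly for the incidences.
\end{itemize}
Hence $\Spec R$ is the incidence poset of the configuration.

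I do not expect a real obstacle. The one point that needs care is the dimension-one step for a general irreducible $f_i$. It follows, for instance, from $\dim A=2$ together with Krull's principal ideal theorem, which gives $\height p_i=1$: a chain $p_i\subsetneq q\subsetneq\fm$ would give a chain of length three in $A$.
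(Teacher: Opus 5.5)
Your proposal is correct and follows the paper's proof, which likewise reruns the argument of Lemma~\ref{lem:spectrum} through Lemma~\ref{lem:localization}. As you do, it uses only that each $\Q[x,y]/(f_i)$ is a one-dimensional domain and that pairwise non-associate irreducible $f_i$ give pairwise incomparable primes. The one thing the paper adds is the degenerate configuration $n=t=0$: there Lemma~\ref{lem:localization} (which assumes $n,t\ge1$) does not apply and $R=0$, so dispose of that case separately before invoking the lemma.
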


\begin{proof}
The empty configuration gives $R=0$, so suppose $n,t\ge1$. The assertion is the argument of Lemma~\ref{lem:spectrum}: for pairwise non-associate irreducible $f_i$ the minimal
primes of $\Q[x,y]/(f_1\cdots f_n)$ are the $(f_i)$, each $\Q[x,y]/(f_i)$ is a one-dimensional domain, and
$\fm_{P_j}/(f_i)$ is maximal whenever $P_j\in C_i$. Both hypotheses are needed: they keep the localization
nonzero and prevent a vertex from disappearing.
\end{proof}

\subsection*{What the construction gives}

The construction loses nothing: every finite bipartite graph without isolated vertices is the poset of such a
configuration, with all curves graphs of polynomials, since a curve $y=g(x)$ meets the line $x=a_j$ at
$(a_j,g(a_j))$ and interpolation makes $g(a_j)$ any prescribed value; adjoining isolated points and the empty
poset are covered by $\Q^{|A_0|}$ and by the zero ring in the first paragraph of the section.

\begin{corollary}\label{cor:domain}
Let $P$ be a finite poset with no chain of three elements. Then $P$ is the spectrum of a Noetherian domain if
and only if it has a single minimal element, and in that case $P=K_{1,t}$, with the domain $\Q$ if $t=0$ and a
one-dimensional domain essentially of finite type over $\Q$ if $t\ge1$. Every nonempty finite poset with no
chain of three elements is the spectrum of a reduced semilocal ring of dimension at most one, essentially of
finite type over $\Q$, of dimension one exactly when $P$ has an edge.
\end{corollary}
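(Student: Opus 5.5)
The corollary has two parts: a characterization of the finite posets that are spectra of Noetherian domains, and a reducedness and semilocality refinement of Theorem~\ref{thm:main}. I will treat them in turn, relying on the construction of this section.

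\emph{Domains, necessity.} Suppose $P\cong\Spec D$ for a Noetherian domain $D$. Then $(0)$ is prime and lies in every prime, so $P$ has a least element and hence a single minimal element. Since $P$ has no chain of three elements, every other element lies directly above that least element. Thus $P=K_{1,t}$, where $t$ is the number of nonzero primes.

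\emph{Domains, sufficiency.} If $t=0$, take $\Q$. If $t\ge1$, the poset is the bipartite graph with $n=1$ and $N(1)=\{1,\dots,t\}$, which has no isolated vertices. Lemma~\ref{lem:spectrum} gives $R=S^{-1}(\Q[x,y]/(f_1))$ with $\Spec R\cong K_{1,t}$. This ring is a localization of $\Q[x,y]/(f_1)\cong\Q[x]$, a domain by Lemma~\ref{lem:data}(iii), so $R$ is a domain. Concretely, $R$ is the semilocal ring of $\Q[x]$ at $t$ rational points. It is one-dimensional and essentially of finite type over $\Q$, as noted after Lemma~\ref{lem:spectrum}.

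\emph{The general statement.} Take the ring $R$ built at the start of the section: $R_0$, or $R_0\times\Q^{|A_0|}$ when $A_0\ne\emptyset$, or $\Q^{|A_0|}$ when there are no edges.
\begin{itemize}
\item \emph{Semilocal.} The maximal ideals of $R_0$ are the images of $\fm_1,\dots,\fm_t$, by Lemma~\ref{lem:spectrum}. A finite product of semilocal rings is semilocal.
\item \emph{Reduced.} The ring $\Q[x,y]/(f_1\cdots f_n)$ is reduced, because the $f_i$ are pairwise non-associate irreducibles (Lemma~\ref{lem:data}(iii),(iv)) in the UFD $\Q[x,y]$. Hence $(f_1\cdots f_n)=\bigcap_i(f_i)$ is radical. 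Localization preserves reducedness, and so do finite products with $\Q$.
\item \emph{Dimension.} This is exactly as in Theorem~\ref{thm:main}: the dimension is one precisely when an edge gives a chain $p_i\subsetneq\fm_j$, and zero otherwise.
\end{itemize}

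The main point requiring care is reducedness, namely checking that the product ideal equals the intersection. This follows because a polynomial divisible by each of the pairwise coprime primes $f_i$ is divisible by their product, by unique factorization. The remaining steps are bookkeeping from Lemmas~\ref{lem:data} and~\ref{lem:spectrum}.
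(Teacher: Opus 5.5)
Your proposal is correct and follows the paper's proof essentially step for step: the least prime $(0)$ gives necessity, the localization of $\Q[x,y]/(f_1)\cong\Q[x]$ at the chosen points realizes $K_{1,t}$, and in the general statement reducedness comes from $f_1\cdots f_n$ being a product of pairwise non-associate irreducibles, semilocality from the maximal ideals being the images of the $\fm_j$, and the rest from products with $\Q^{|A_0|}$. The only step you leave implicit is that, in a finite poset, a unique minimal element lies below every element; that is what gives $P=K_{1,t}$ in the sufficiency direction, whereas you derived $P=K_{1,t}$ only from the domain hypothesis.
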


\begin{proof}
A domain has exactly one minimal prime, so the first direction is clear; conversely, if $P$ has a single
minimal element then every maximal element lies above it and $P=K_{1,t}$, and for $t=0$ the field $\Q$, for
$t\ge1$ the localization of $\Q[x,y]/(f_1)$ given by Theorem~\ref{thm:geometric} is a one-dimensional domain.
For the second statement, an antichain is the spectrum of $\Q^{|P|}$, and otherwise the ring is a localization
of $\Q[x,y]/(f_1\cdots f_n)$ with the $f_i$ distinct irreducibles, so $f_1\cdots f_n$ is squarefree, the
quotient is reduced, and the localization has as its maximal ideals the images of the $\fm_j$; a product with $\Q^{|A_0|}$ is again reduced and
semilocal and does not raise the dimension.
\end{proof}

\begin{proposition}\label{prop:free}
Every finite poset $P$ with no chain of three elements is isomorphic to $\Spec R$ for a ring $R$ that is
essentially of finite type over $\Q$, reduced, semilocal and excellent, and such that for every point $P_j$ of
the construction the localization $R_{\fm_j}$ is a hypersurface quotient of a two-dimensional regular local
ring; in particular $R_{\fm_j}$ is a complete intersection, hence Gorenstein and Cohen--Macaulay.
\end{proposition}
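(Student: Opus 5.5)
We take the ring already built: $R$ is the ring of Theorem~\ref{thm:main}, namely $R=S^{-1}\bigl(\Q[x,y]/(f)\bigr)$ with $f=f_1\cdots f_n$ as in Lemma~\ref{lem:spectrum}, or its product with $\Q^{|A_0|}$, or $\Q^{|P|}$, or $0$. The isomorphism $\Spec R\cong P$ is Lemma~\ref{lem:spectrum} together with the reduction at the start of Section~\ref{sec:geometric}. Being essentially of finite type over $\Q$, reduced and semilocal is Corollary~\ref{cor:domain}. So only two properties are new: excellence, and the local structure at each $\fm_j$.

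\textbf{Excellence.} A finitely generated $\Q$-algebra is a quotient of a polynomial ring over the field $\Q$, and fields are excellent. Excellence passes to finitely generated algebras, to quotients and to localizations, and a finite product of excellent rings is excellent. Hence $\Q[x,y]/(f)$, its localization $R_0$, and $R_0\times\Q^{|A_0|}$ are all excellent. This is a citation of the standard permanence properties (EGA~IV~7.8.3, or the Stacks Project).

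\textbf{Local structure.} Localization commutes with the localization defining $R$, because $\fm_j$ is disjoint from $S$. So
\[
R_{\fm_j}\cong\bigl(\Q[x,y]/(f)\bigr)_{\fm_j}\cong\Q[x,y]_{\fm_j}/(f).
\]
The ring $T=\Q[x,y]_{\fm_j}$ is a regular local ring of dimension two, since $\fm_j=(x-a_j,y-b_j)$ is generated by two elements and $\height\fm_j=2$. Moreover $f\ne0$ and $f\in\fm_j$, because some $f_i$ vanishes at $P_j$ (every vertex has a neighbor). Thus $R_{\fm_j}=T/(f)$ with $f$ a nonzero nonunit of a regular local ring, that is, a hypersurface.

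Since $T$ is a domain, $f$ is a regular element of $T$. So $R_{\fm_j}$ is the quotient of a regular local ring by a regular sequence of length one, which is by definition a complete intersection. Complete intersections are Gorenstein (Bruns--Herzog~3.1.19 / Matsumura~21.3), and Gorenstein local rings are Cohen--Macaulay. The factors $\Q$ coming from $A_0$ are fields and are not among the $P_j$; for those maximal ideals the localization is $\Q$, which is trivially regular. If desired, one can also check that every maximal ideal of $R_0$ is one of the $\fm_j$, so no other localizations arise.

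The main obstacle is only bookkeeping. One point is checking that $f\in\fm_j$, so the quotient is a genuine hypersurface and not the zero ring or $T$ itself; this uses the assumption that there are no isolated vertices. The other is choosing the right references for the permanence of excellence.
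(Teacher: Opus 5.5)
Your proof is correct and essentially the paper's. You use the same ring $R_0\times\Q^{|A_0|}$, get reducedness and semilocality from the squarefree product and Lemma~\ref{lem:localization} (via Corollary~\ref{cor:domain}), and identify $R_{\fm_j}\cong\Q[x,y]_{\fm_j}/(F)$ with $F$ a nonzero element of $\fm_j$ in a two-dimensional regular local domain. The only divergence is in the excellence of the product: you cite that finite products of excellent rings are excellent, whereas the paper writes $R$ as a quotient of $R_0[u_1,\dots,u_m]/(u_1^2-u_1,\dots,u_m^2-u_m)$; both are fine, and one could also note that $R_0\times\Q^m$ is itself a localization of the finitely generated algebra $\Q[x,y]/(F)\times\Q^m$.
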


\begin{proof}
If the graph part of $P$ is empty then $P=A_0$ is an antichain, $R=\Q^{|P|}$ is a finite product of fields,
and the assertions are immediate. Otherwise write $F=f_1\cdots f_n$, let $R_0=S^{-1}\bigl(\Q[x,y]/(F)\bigr)$ be
the ring of the construction above and $R=R_0\times\Q^{|A_0|}$. A localization of a finitely generated
$\Q$-algebra is excellent, and so is a quotient of one, so $R_0$
is excellent; and $\Q$ is a quotient of $R_0$, so $R$ is a quotient of
$R_0[u_1,\dots,u_m]/(u_1^2-u_1,\dots,u_m^2-u_m)$ with $m=|A_0|$, since $R_0[u]/(u^2-u)\cong R_0\times R_0$;
hence $R$ is excellent too. The ring $R_0$ is reduced because $F$ is squarefree, and its maximal ideals are the images of the $\fm_j$ by Lemma~\ref{lem:localization}, so $R$ is
semilocal. At $\fm=\fm_j$ the ring $R_\fm$ is $\Q[x,y]_{\fm_j}/(F)$, a
quotient of a two-dimensional regular local ring by a nonzerodivisor; such a ring is a hypersurface, hence a
complete intersection, hence Gorenstein and Cohen--Macaulay. The remaining maximal ideals come from the factors
$\Q$, and a field is a zero-dimensional regular local ring.
\end{proof}

\subsection*{Why localization is needed}

Localization is not an artifact of the construction: a nonzero finitely generated
$k$-algebra with finitely many primes has dimension zero, so the ring of Theorem~\ref{thm:main}(ii) cannot
in general be taken finitely generated. This is \cite[Lemma~10.61.3]{Stacks}; we recall the proof.

\begin{proposition}\label{prop:fg}
Let $k$ be a field and $R$ a nonzero finitely generated $k$-algebra with finitely many primes. Then $\dim R=0$ and
$\Spec R$ is discrete.
\end{proposition}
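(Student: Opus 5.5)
The plan is to show that every prime of $R$ is maximal, which gives $\dim R=0$ at once. Discreteness then follows because, in a ring with finitely many primes all of them maximal, every point of $\Spec R$ is closed. A finite set of closed points carries the discrete topology, since each point is the complement of a finite union of closed points and hence open. So everything reduces to showing that a prime $\fp$ of $R$ cannot properly contain another prime.

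Suppose $\fp\subsetneq\fq$ are primes of $R$, and pass to the domain $D=R/\fp$. This is a finitely generated $k$-algebra with finitely many primes and $\dim D\ge1$. There are two ways to rule this out; I would use the second, since it avoids Jacobson-ring technology.

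\begin{enumerate}[label=(\alph*),itemsep=1pt,topsep=2pt]
\item \emph{Via the Jacobson property.} Finitely generated $k$-algebras are Jacobson rings, by the Nullstellensatz. In $D$ the zero ideal is prime, so it equals the intersection of the finitely many maximal ideals $\fn_1,\dots,\fn_r$ of $D$. By prime avoidance, $(0)=\bigcap\fn_i\supseteq\prod\fn_i$ forces $(0)$ to contain, hence equal, some $\fn_i$. So $D$ is a field, contradicting $\dim D\ge1$.
\item \emph{Via Noether normalization.} Write $D$ as a finite extension of a polynomial ring $k[z_1,\dots,z_d]$ with $d=\dim D\ge1$. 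Lying over holds for the integral extension $k[z_1,\dots,z_d]\subseteq D$. Therefore the restriction map $\Spec D\to\Spec k[z_1,\dots,z_d]$ is surjective, so $\Spec k[z_1,\dots,z_d]$ is finite. But $k[z_1]$ already has infinitely many primes: there are infinitely many monic irreducibles over any field, by Euclid's argument applied to $1+\prod$ of a finite list. Pulling these back along the surjection $k[z_1,\dots,z_d]\to k[z_1]$, $z_i\mapsto0$ for $i\ge2$, gives infinitely many primes of $k[z_1,\dots,z_d]$, a contradiction.
\end{enumerate}

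The only genuinely nontrivial step is the input from commutative algebra: either the Jacobson property or Noether normalization together with lying over. Both are standard, and the proposition simply recalls \cite[Lemma~10.61.3]{Stacks}. The remaining details are routine: the infinitude of irreducibles over an arbitrary (possibly finite) field, and the finite-topology argument for discreteness.
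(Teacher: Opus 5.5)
Your proof is correct. Your preferred route (b) is genuinely different from the paper's, while your route (a) is close to what the paper does.

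The paper does not cite the Jacobson property; it proves the needed special case inline. It passes to a domain $A=R/P$ of positive dimension and supposes $A$ has only finitely many maximal ideals. It then picks $0\ne f$ in their intersection, which is nonzero because $A$ is a domain. Using Zariski's lemma, it shows that a maximal ideal of $A_f$ contracts to a maximal ideal of $A$ not containing $f$. In effect this shows that the Jacobson radical of an affine domain is zero.

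Route (b) replaces Zariski's lemma and the localization trick by Noether normalization, lying over, and Euclid's argument in $k[z_1]$. It gives a more geometric reason: $\Spec$ of a positive-dimensional affine domain surjects onto $\Spec$ of a polynomial ring in at least one variable, which visibly has infinitely many primes. It also needs no special care over finite fields. The paper's argument, by contrast, needs no theory of integral extensions and produces the missing maximal ideals directly.

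Two small remarks. In (b) you only need $d\ge1$, and this is immediate: $d=0$ would make $D$ a finite-dimensional $k$-domain, hence a field. So the equality $d=\dim D$ is more than you use. In (a), the step from $(0)\supseteq\prod\fn_i$ to $(0)\supseteq\fn_i$ is the fact that a prime containing a product of ideals contains one of them, not prime avoidance.
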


\begin{proof}
Suppose $\dim R\ge1$ and choose a minimal prime $P$ with $\dim R/P\ge1$, possible because
$\dim R=\sup_P\dim R/P$ over the minimal primes. Then $A=R/P$ is a finitely generated domain of positive
dimension. If $A$ had only finitely many maximal ideals $\fn_1,\dots,\fn_s$, then each $\fn_i\ne0$, so
$\bigcap_i\fn_i\supseteq\prod_i\fn_i\ne0$; choose $0\ne f$ in the intersection. Then $f$ is not a
zero-divisor, so $A_f\ne0$ and $A_f$ has a maximal ideal $\fM$; put $\fp=\fM\cap A$. The composite
$A/\fp\hookrightarrow A_f/\fM$ is injective, and $A_f/\fM$ is a finitely generated $k$-algebra that is a
field, hence finite over $k$ by Zariski's lemma \cite[Proposition~7.9]{AM}. So $A/\fp$ is finite-dimensional
over $k$, hence Artinian, hence a field, and $\fp$ is maximal. But $f/1$ is a unit of $A_f$, so
$f\notin\fp$, whereas every maximal ideal contains $f$. So $A$ has infinitely many maximal ideals, and
therefore $R$ has infinitely many primes: a contradiction. Hence $\dim R=0$; then every prime of $R$ is
maximal, so $\Spec R$ is a finite set of closed points, which is discrete.
\end{proof}

\section{The arithmetic construction}\label{sec:arithmetic}

Here is a second construction, an application of Lemma~\ref{lem:localization} and the arithmetic of
cyclotomic polynomials. It keeps the shape of the ideals of \cite{A} and changes only their exponents, and the arithmetic
behind it explains which neighborhoods those exponents can produce. Nothing later in the paper depends on this
section.

\subsection*{The order of \texorpdfstring{$y$}{y}}

\begin{lemma}\label{lem:order}
Let $k$ be a field, $H\in k[y]$ irreducible with $H(0)\ne0$, and $a\ge1$. Then
$(x^a-y)\subseteq(x-y,H(y))$ if and only if $H$ divides $y^{a-1}-1$. If moreover the class of $y$ has
finite multiplicative order $d$ in the field $k[y]/(H)$, this is equivalent to $d\mid a-1$.
\end{lemma}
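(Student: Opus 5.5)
The plan is to pass to the quotient by $x-y$, where the question becomes one about the single variable $y$. The key identification is
\[
k[x,y]/(x-y,H(y))\cong k[y]/(H),
\]
obtained by sending $x\mapsto y$. Since $H$ is irreducible, this is a field, so $\fm=(x-y,H(y))$ is a maximal ideal of $k[x,y]$, and in particular a proper prime. An element $F(x,y)$ lies in $\fm$ exactly when its image $F(y,y)$ vanishes in $k[y]/(H)$, that is, exactly when $H$ divides $F(y,y)$ in $k[y]$. The ideal $(x^a-y)$ is principal, so it is contained in $\fm$ if and only if its generator is. Applying the criterion to $F=x^a-y$ gives the condition $H\mid y^a-y=y\,(y^{a-1}-1)$.

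The next step is to remove the factor $y$. The polynomial $H$ is irreducible, hence prime in the PID $k[y]$. By hypothesis $H(0)\neq0$, so $H\nmid y$. Therefore $H\mid y(y^{a-1}-1)$ holds if and only if $H\mid y^{a-1}-1$, which is the first assertion. The case $a=1$ is consistent with this: then $y^{a-1}-1=0$, which every $H$ divides, and indeed $x-y\in\fm$.

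For the second assertion, write $\bar y$ for the class of $y$ in $L=k[y]/(H)$. The divisibility $H\mid y^{a-1}-1$ says exactly that $\bar y^{\,a-1}=1$ in $L$. Note that $\bar y\neq0$ because $H(0)\neq0$, so $\bar y$ lies in the group $L^\times$. If $\bar y$ has finite order $d$ there, then $\bar y^{\,m}=1$ if and only if $d\mid m$, by the standard fact about orders in a group. This holds for all $m\ge0$, including $m=0$ where $d\mid0$ trivially, and it gives $d\mid a-1$.

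There is no real obstacle in this argument. The only points that need care are the verification that the isomorphism $k[x,y]/(x-y,H)\cong k[y]/(H)$ is well defined, and the use of $H(0)\neq0$ both to cancel the factor $y$ and to ensure that $\bar y$ is a unit.
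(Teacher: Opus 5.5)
Your proof is correct and takes the same approach as the paper: the substitution $x\mapsto y$ reduces the condition to $H\mid y(y^{a-1}-1)$, the hypothesis $H(0)\neq0$ together with the primality of $H$ lets you cancel the factor $y$, and $\bar y^{\,a-1}=1$ is then read as $d\mid a-1$. Your extra remarks, on the case $a=1$ and on $\bar y$ being a unit, are correct additions that do not change the argument.
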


\begin{proof}
Let $\pi:k[x,y]\to k[y]$ be the map $x\mapsto y$. Its kernel is $(x-y)$, and for every ideal
$I\subseteq k[y]$ one has $\pi^{-1}(I)\supseteq(x-y)$ and $\pi^{-1}(I)/(x-y)=I$. So $x^a-y$ lies in
$(x-y,H(y))$ exactly when $y^a-y$ lies in $(H)$, that is, exactly when $H$ divides $y(y^{a-1}-1)$. Since $H$
is irreducible, nonconstant and $H(0)\ne0$, it is coprime to $y$, so this is the same as $H\mid y^{a-1}-1$,
that is, $y^{a-1}\equiv1$ in $k[y]/(H)$, which by definition of the order means $d\mid a-1$. Over $\Q$ with
$H=\Phi_d$ the order is exactly $d$, because $\Phi_d$ is the minimal polynomial of a primitive $d$-th root of
unity; then the condition reads $d\mid a-1$.
\end{proof}

\subsection*{The construction}

\begin{theorem}\label{thm:arithmetic}
Let $P=(X\sqcup Y;E)$ be a finite bipartite graph without isolated vertices, with $X=\{x_1,\dots,x_n\}$,
$Y=\{y_1,\dots,y_t\}$ and $N(i)=\{j:(i,j)\in E\}$. Let $d_1,\dots,d_t$ be distinct odd primes, put
\[
a_i=1+2^i\prod_{j\in N(i)}d_j,\qquad p_i=(x^{a_i}-y),\qquad \fm_j=(x-y,\Phi_{d_j}(y)),
\]
and let $R=S^{-1}(\Q[x,y]/(p_1\cdots p_n))$ with $S$ the complement of the union of the images of the
$\fm_j$. Then $\Spec R\cong P$, and $R$ has dimension at most one.
\end{theorem}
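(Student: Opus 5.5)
The plan is to run the argument of Lemma~\ref{lem:spectrum} with the new data, so that the only new input is the divisibility criterion of Lemma~\ref{lem:order}. I would first verify the hypotheses of Lemma~\ref{lem:localization} for the primes $p_1,\dots,p_n$ and $\fm_1,\dots,\fm_t$ of $A=\Q[x,y]$. Then I would identify the primes of $A$ lying between some $p_i$ and some $\fm_j$.

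\emph{The ideals.} Each $f_i=x^{a_i}-y$ is linear in $y$ with coefficient $-1$. The argument of Lemma~\ref{lem:data}(iii) therefore shows that $f_i$ is irreducible, so $p_i$ is prime. Evaluating $y$ at $x^{a_i}$ also gives $A/p_i\cong\Q[x]$. The ideal $\fm_j$ is maximal, because
\[
A/\fm_j\cong\Q[y]/(\Phi_{d_j}(y))
\]
is a field: substitute $x=y$, and use that $\Phi_{d_j}$ is irreducible over $\Q$. The $\fm_j$ are pairwise distinct because the $\Phi_{d_j}$ are distinct monic irreducibles, which is where distinctness of the primes $d_j$ enters. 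The exponents $a_i$ are pairwise distinct, since $v_2(a_i-1)=i$. Comparing $y$-coefficients as in Lemma~\ref{lem:data}(iv), an inclusion $p_i\subseteq p_{i'}$ forces $f_i=f_{i'}$, hence $a_i=a_{i'}$ and $i=i'$. So the $p_i$ are distinct and pairwise incomparable. This is the only role of the factor $2^i$: it separates minimal elements that have equal neighborhoods, just as the constants $c_i$ did before.

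\emph{Incidences.} Since $\Phi_{d_j}(0)=\pm1\neq0$, Lemma~\ref{lem:order} applies with $H=\Phi_{d_j}$, for which the order of $y$ is exactly $d_j$. It gives
\[
p_i\subseteq\fm_j\iff d_j\mid a_i-1=2^i\prod_{k\in N(i)}d_k .
\]
Because $d_j$ is an odd prime and the $d_k$ are distinct primes, this holds exactly when $j\in N(i)$. Since no vertex is isolated, each $\fm_j$ contains some $p_i$, so Lemma~\ref{lem:localization} applies. It also follows that every $p_i$ and every $\fm_j$ lies in the set it describes.

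\emph{Primes in between.} Let $q$ be a prime with $p_i\subseteq q\subseteq\fm_j$. Either $q=p_i$, or $q/p_i$ is a nonzero prime of $A/p_i\cong\Q[x]$. In the second case $q/p_i$ is maximal and contained in $\fm_j/p_i$, so $q=\fm_j$. The case $q=p_{i'}$ with $i'\neq i$ is excluded by incomparability, and no $\fm_j$ is contained in a $p_i$ since $p_i$ is not maximal. Hence $\Spec R$ consists of the images of the $p_i$ and $\fm_j$, ordered exactly by $E$, so $\Spec R\cong P$. Finally, $R$ is a localization of $A/(f_1\cdots f_n)$ with $f_1\cdots f_n$ nonconstant, so $\dim R\le1$ by the third standard fact of Section~\ref{sec:prelim}.

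The main obstacle, such as it is, is the incidence step: it must give exactly the prescribed neighborhoods, with no spurious divisibility. This rests on the primes $d_j$ being odd and distinct, so that $d_j\nmid 2^i$ and $d_j\nmid d_k$ for $k\ne j$. It also rests on Lemma~\ref{lem:order} giving order exactly $d_j$ for $\Phi_{d_j}$.
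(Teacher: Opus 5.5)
Your proposal is correct and follows the paper's proof essentially step for step: primality of $p_i$ from the $y$-linear form, maximality and distinctness of the $\fm_j$ via $x\mapsto y$, the incidence criterion from Lemma~\ref{lem:order}, separation of the $a_i$ by $v_2(a_i-1)=i$, and then the argument of Lemma~\ref{lem:spectrum} fed into Lemma~\ref{lem:localization}. The one thing you leave out, which the paper handles first, is the degenerate case $P=\emptyset$: there Lemma~\ref{lem:localization}, which needs $n,t\ge1$, does not apply and ``$f_1\cdots f_n$ nonconstant'' fails, but the empty product gives $R=0$ directly.
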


\begin{proof}
If $P$ is empty then $R=0$, so suppose $n,t\ge1$. The polynomial $x^{a_i}-y$ is primitive of $y$-degree one over $\Q[x]$, so $p_i$ is prime by the argument
of Lemma~\ref{lem:data}(iii).

Modulo $\fm_j$ the variables $x$ and $y$ are equal, so $\Q[x,y]/\fm_j\cong\Q[y]/(\Phi_{d_j}(y))$ is a
field. Hence $\fm_j$ is maximal. Two of them coincide only if $\Phi_{d_j}\in\fm_{j'}$, which after
substituting $x\mapsto y$ gives $\Phi_{d_{j'}}\mid\Phi_{d_j}$ and hence $d_j=d_{j'}$; so distinct $d_j$ give
distinct $\fm_j$.

By Lemma~\ref{lem:order}, $p_i\subseteq\fm_j$ if and only if $d_j$ divides
$a_i-1=2^i\prod_{j'\in N(i)}d_{j'}$. The $d_j$ are pairwise coprime and odd, so this happens exactly when
$j\in N(i)$.

The numbers $a_i$ are distinct: the $2$-adic valuation of $a_i-1$ is $i$ and its odd part is
$\prod_{j\in N(i)}d_j$. So the $p_i$ are distinct, and they are incomparable: from
$p_i\subseteq p_{i'}$ one gets $x^{a_i}-y=(x^{a_{i'}}-y)w$ with $w\in\Q[x]$, and comparing coefficients of
$y$ forces $w\in\Q^\times$ and $a_i=a_{i'}$.

Every $p_i$ lies in some $\fm_j$ and every $\fm_j$ contains some $p_i$, because all vertices have a
neighbor. Finally, $\Q[x,y]/(x^{a_i}-y)\cong\Q[x]$ is a one-dimensional domain, so the argument of
Lemma~\ref{lem:spectrum} applies: the only primes between $p_i$ and $\fm_j$ are the two ends.

Lemma~\ref{lem:localization} now identifies $\Spec R$ with $P$. The dimension bound follows as before.
\end{proof}

\subsection*{Which neighborhoods the exponents allow}

Fix the moduli $d_1,\dots,d_t$. Lemma~\ref{lem:order} says that the set of $j$ with $p_i\subseteq\fm_j$, which
is the neighborhood of $x_i$, is decided by a single congruence, $d_j\mid a_i-1$: the question is which
families of neighborhoods can arise this way, and the answer is one condition on each of them.

\begin{theorem}\label{thm:moduli}
Let $n,t\ge0$, let $d_1,\dots,d_t\ge2$, and let $N:\{1,\dots,n\}\to\mathcal P(\{1,\dots,t\})$. Then the
following are equivalent.
\begin{enumerate}[label=(\roman*),itemsep=1pt,topsep=2pt]
\item There are integers $a_1,\dots,a_n\ge2$ such that $d_j\mid a_i-1$ exactly when $j\in N(i)$.
\item For every $i$,
\[
N(i)=\bigl\{\,j : d_j \mid \operatorname{lcm}\{d_s : s\in N(i)\}\,\bigr\},
\]
with $\operatorname{lcm}\emptyset=1$.
\end{enumerate}
If (ii) holds, the $a_i$ can moreover be chosen with $a_1<a_2<\cdots<a_n$.
\end{theorem}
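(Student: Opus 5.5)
The argument is elementary number theory, organized around one observation: for a fixed integer $a$, the set $\{j : d_j\mid a-1\}$ is closed under the operation in (ii). Set $L_i=\operatorname{lcm}\{d_s:s\in N(i)\}$, with $L_i=1$ when $N(i)=\emptyset$. The proof then splits into (i)$\Rightarrow$(ii) and (ii)$\Rightarrow$(i).

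For (i)$\Rightarrow$(ii), fix $i$ and write $D(i)$ for the right-hand side of (ii).
\begin{itemize}
\item Each $s\in N(i)$ satisfies $d_s\mid L_i$ trivially, so $N(i)\subseteq D(i)$.
\item Conversely, every $d_s$ with $s\in N(i)$ divides $a_i-1$, so their lcm $L_i$ divides $a_i-1$. If $d_j\mid L_i$, then $d_j\mid a_i-1$, and the defining property of $a_i$ in (i) gives $j\in N(i)$. Hence $D(i)\subseteq N(i)$.
\end{itemize}
No hypothesis beyond divisibility is used; the case $N(i)=\emptyset$ is covered because $d_j\ge2$ never divides $1$.

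For (ii)$\Rightarrow$(i), I would choose $a_i=1+L_im_i$ for a suitable positive integer $m_i$. Then $d_j\mid a_i-1$ for every $j\in N(i)$ automatically. For $j\notin N(i)$, hypothesis (ii) says $d_j\nmid L_i$, and we must choose $m_i$ so that $d_j\nmid L_im_i$ for all these finitely many $j$. Taking $m_i\equiv1\pmod{d_j}$ for all $j\notin N(i)$ works, since then $L_im_i\equiv L_i\not\equiv 0\pmod{d_j}$. Concretely, take
\[
m_i=1+k_iQ,\qquad Q=\prod_{j=1}^t d_j,
\]
with $k_i\ge0$ arbitrary; when $t=0$ we have $Q=1$ and there is nothing to avoid. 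This gives $a_i-1=L_i(1+k_iQ)$, with $a_i\ge2$ because $L_i\ge1$ and $m_i\ge1$.

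For the ordering, the values $1+L_i(1+kQ)$ are unbounded as $k$ grows, so we can pick $k_1,k_2,\dots$ inductively with $a_1<a_2<\cdots<a_n$.

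The only step needing care is making $m_i$ avoid all the forbidden divisibilities simultaneously. The moduli need not be coprime, so a Chinese-remainder choice of residues could in principle conflict. The choice $m_i\equiv1$ modulo the full product $Q$ sidesteps this entirely, because it reduces each forbidden condition to $d_j\nmid L_i$, which is exactly hypothesis (ii). The degenerate case $n=0$ is vacuous.
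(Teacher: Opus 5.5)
Your proof is correct and follows the paper's argument essentially step for step. For (i)$\Rightarrow$(ii) you use the same $L_i$, and for (ii)$\Rightarrow$(i) the paper likewise takes $a_i=1+L_iK$ with $K\equiv1$ modulo $D=d_1\cdots d_t$, so that each forbidden divisibility reduces to $d_j\nmid L_i$ (your restriction $k_i\ge0$ is just a tidier way to ensure $a_i\ge2$).
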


\begin{proof}
(i)$\Rightarrow$(ii). Put $L_i=\operatorname{lcm}\{d_s:s\in N(i)\}$, and $L_i=1$ if $N(i)$ is empty. By (i)
each $d_s$ with $s\in N(i)$ divides $a_i-1$, hence $L_i$ divides $a_i-1$. So $d_j\mid L_i$ implies
$d_j\mid a_i-1$, which is $j\in N(i)$; and conversely $j\in N(i)$ gives $d_j\mid L_i$ by definition of $L_i$.

(ii)$\Rightarrow$(i). Let $D=d_1\cdots d_t$. For an index $i$ consider the set
\[
T_i=\{\,1+L_iK \;:\; K\equiv1 \pmod D\,\}.
\]
An element $1+L_iK$ of $T_i$ satisfies $d_j\mid L_iK$ if and only if $d_j\mid L_i$, because $K\equiv1$ modulo
$d_j$ makes $K$ a unit modulo $d_j$; by (ii) this happens exactly for $j\in N(i)$. So every sufficiently large element of $T_i$ is a legitimate choice for $a_i$, and $T_i$ is an unbounded arithmetic progression, hence infinite. Choosing
$a_i\in T_i$ with $a_i>a_{i-1}$ for each $i$ in turn gives $a_1<\cdots<a_n$ and $a_i\ge2$.
\end{proof}

\begin{corollary}\label{cor:coprime}
With the notation of Theorem~\ref{thm:moduli}:
\begin{enumerate}[label=(\roman*),itemsep=1pt,topsep=2pt]
\item if $d_1,\dots,d_t$ are pairwise coprime then (ii) holds for every $N$, so every family of
neighborhoods is available, and the exponents of Theorem~\ref{thm:arithmetic}, with $2$ replaced by a
prime dividing none of the $d_j$, are one choice realizing it;
\item if $d_1\mid d_2\mid\cdots\mid d_t$ are pairwise distinct then (ii) holds exactly when each $N(i)$ is an
initial segment $\{1,\dots,m\}$, so the moduli $d_j=2^{j+1}$ of \cite{A} produce only nested neighborhoods;
\item if (ii) holds and $d_j\mid d_{j'}$ with $j'\in N(i)$, then $j\in N(i)$: divisibility between the moduli
forces monotone neighborhoods.
\end{enumerate}
\end{corollary}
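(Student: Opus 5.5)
All three parts reduce to condition (ii) of Theorem~\ref{thm:moduli}, read through the number $L_i=\operatorname{lcm}\{d_s:s\in N(i)\}$. The inclusion $N(i)\subseteq\{j:d_j\mid L_i\}$ always holds. So (ii) says exactly that no $d_j$ with $j\notin N(i)$ divides $L_i$. I will check that condition in each situation, and use the proof of (ii)$\Rightarrow$(i) for the explicit choice of exponents in part (i).

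For (i), suppose the $d_j$ are pairwise coprime. Then $L_i=\prod_{s\in N(i)}d_s$. Let $j\notin N(i)$ with $d_j\mid L_i$. Since $d_j\ge2$, some prime $q$ divides $d_j$, hence divides some $d_s$ with $s\in N(i)$ and $s\ne j$. This contradicts coprimality, so (ii) holds for every $N$.

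For the exponents, take a prime $\ell$ dividing no $d_j$ and put $a_i=1+\ell^i\prod_{j\in N(i)}d_j$. Then $d_j\mid a_i-1$ holds iff $d_j\mid\ell^iL_i$. Because $\gcd(d_j,\ell)=1$, this is iff $d_j\mid L_i$, that is, iff $j\in N(i)$. The $a_i$ are distinct because the $\ell$-adic valuation of $a_i-1$ is $i$. Each $a_i\ge2$ since $\ell^i\ge2$. This recovers Theorem~\ref{thm:arithmetic} with $\ell=2$ there, which is allowed when the $d_j$ are odd.

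For (iii), let (ii) hold, $j'\in N(i)$ and $d_j\mid d_{j'}$. Then $d_{j'}\mid L_i$, so $d_j\mid L_i$, and (ii) gives $j\in N(i)$. This is immediate.

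For (ii), suppose $d_1\mid\cdots\mid d_t$ with the $d_j$ distinct.
\begin{itemize}
\item If (ii) holds and $N(i)\neq\emptyset$, let $m=\max N(i)$. For each $j\le m$ we have $d_j\mid d_m$, so part (iii) puts $j$ in $N(i)$. Hence $N(i)=\{1,\dots,m\}$.
\item If $N(i)=\emptyset$, then $L_i=1$. No $d_j\ge2$ divides $1$, so the empty set, which is the initial segment with $m=0$, is allowed.
\item Conversely, let $N(i)=\{1,\dots,m\}$. Then $L_i=d_m$, and $d_j\mid d_m$ holds iff $j\le m$. Indeed, if $j>m$ then $d_m\mid d_j$ with $d_j\ne d_m$, so $d_j>d_m$ and $d_j\nmid d_m$. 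So (ii) holds.
\end{itemize}
The moduli $d_j=2^{j+1}$ form such a chain, so \cite{A}'s exponents give only nested neighborhoods.

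There is no real obstacle. The only delicate point is the explicit exponent choice in (i). Replacing $2$ by $\ell$ is exactly what keeps $\gcd(\ell,d_j)=1$ when some $d_j$ is even; that coprimality is what lets the factor $\ell^i$ be dropped.
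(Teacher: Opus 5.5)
Your proof is correct and follows the paper's argument. In part (i) you use the coprimality of $\ell$ and of the $d_j$, and in part (ii) you use $L_i=d_{\max N(i)}$ along the divisor chain. The differences are minor: you derive the necessity half of (ii) from (iii), where the paper reads both directions off the identity $\{j: d_j\mid L_i\}=\{1,\dots,\max N(i)\}$, and you also check that the $a_i$ are distinct, which the statement does not require.
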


\begin{proof}
(i) If $j\notin N(i)$ then $d_j$ is coprime to every $d_s$ with $s\in N(i)$, hence to their lcm, so
$d_j\nmid L_i$. The exponents $a_i=1+q^i\prod_{j\in N(i)}d_j$ realize $N$ for any prime $q$ coprime to
the $d_j$: $q^i$ is a unit modulo each $d_j$, and the $d_j$ being pairwise coprime,
$d_j\mid\prod_{s\in N(i)}d_s$ exactly when $j\in N(i)$. (ii) In a chain of distinct moduli $\operatorname{lcm}\{d_s:s\in S\}=d_{\max S}$ for
$S\ne\emptyset$ and $1$ for $S=\emptyset$, so $\{j:d_j\mid L_i\}=\{1,\dots,\max N(i)\}$, with
$\max\emptyset=0$. (iii) If $j'\in N(i)$ then $d_{j'}\mid L_i$, hence $d_j\mid L_i$ and
$j\in N(i)$ by Theorem~\ref{thm:moduli}(ii).
\end{proof}

Monotonicity is not sufficient: with $d=(2,3,6)$ the set $\{1,2\}$ is monotone in the sense of (iii), since
neither $d_3\mid d_1$ nor $d_3\mid d_2$, yet it fails (ii), its lcm being $6=d_3$. Coprimality is not
necessary: with $d=(6,10)$ condition (ii) holds for every $N$ although $\gcd(6,10)=2$.

The choice of cyclotomic moduli is not a special one over $\Q$. If $H\in\Q[y]$ is irreducible and the class of
$y$ has finite order in $\Q[y]/(H)$, then $H$ divides $y^d-1$ for some $d$, and $y^d-1$ is squarefree with
irreducible factors the $\Phi_e$, so $H=\Phi_e$ for some $e$; by Lemma~\ref{lem:order} this says that
$(x^a-y)\subseteq(x-y,H(y))$ holds for some $a\ge2$ exactly when $H$ is cyclotomic, and then exactly when
$e\mid a-1$.

Over other fields both constructions still work. The geometric one uses $2t+n$ distinct elements of the field,
that is, after a finite extension if the field is small. The arithmetic one needs an irreducible
$h_j\in k[y]$ whose class has order $d_j$, for which one takes an irreducible factor of $\Phi_{d_j}$ over $k$;
that factor has class of order $d_j$ only when $\operatorname{char}k$ does not divide $d_j$.

The exponents in \cite{A} are $2^i+1$ and $2^j$, so its moduli are $2^{j+1}$. This is not the shape used above, so
the result that applies is Lemma~\ref{lem:order} of this paper, not Theorem~\ref{thm:arithmetic}: the latter
requires the moduli to be distinct odd primes, and the $2^{j+1}$ are not odd. Since $y^{2^j}+1=\Phi_{2^{j+1}}(y)$,
\cite[Lemma~4.1]{A} is the case $a=2^n+1$, $d=2^{m+1}$ of Lemma~\ref{lem:order}, and with these moduli
$d_j\mid a_i-1$ means $2^{j+1}\mid2^i$, that is, $j\le i-1$: the neighborhoods are the initial segments, as in
Corollary~\ref{cor:coprime}(ii). In \cite[Theorem~4.2]{A} the index runs over $i=m+1,\dots,m+n$ with
$m=|Y|$, so each of these segments is all of $Y$ and the poset is $K_{n,m}$: the restriction in the complete
bipartite case lies in the exponents and not in the ideals. The further ideals of \cite[Examples~5.1
and~5.2]{A} have other shapes and give the neighborhoods drawn there.

\section{The converse}\label{sec:converse}

It remains to show that a Noetherian ring with finitely many primes has no chain of three primes.

\begin{lemma}\label{lem:key}
A Noetherian ring of dimension at least two has infinitely many prime ideals of height one.
\end{lemma}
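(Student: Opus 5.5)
The plan is to reduce to a chain of length two and then count height-one primes in a two-dimensional local domain. Since $\dim R\ge2$, there is a chain $\fp_0\subsetneq\fp_1\subsetneq\fp_2$ in $R$. Put $A=(R/\fp_0)_{\fp_2}$. This is a Noetherian local domain of dimension at least two, and its primes correspond to primes $\fq$ of $R$ with $\fp_0\subseteq\fq\subseteq\fp_2$. It therefore suffices to show that $A$ has infinitely many primes of height one. Each such prime pulls back to a prime $\fq$ of $R$ with $\fp_0\subsetneq\fq$ and no prime strictly between $\fp_0$ and $\fq$. This does not yet make $\fq$ of height one in $R$: $\fq$ could have height greater than one through other minimal primes. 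So I will also choose $\fp_0$ to be minimal in $R$ and arrange $\fp_2$ so that the pulled-back primes have height exactly one. The cleanest route is a Krull-type argument carried out inside $R$ directly.

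The main step is the classical argument. Suppose a Noetherian local ring $(A,\fm)$ of dimension $\ge2$ had only finitely many height-one primes $\fq_1,\dots,\fq_r$. Since $\dim A\ge2$, $\fm$ has height $\ge2$, so $\fm$ is not contained in the union of the $\fq_i$ and the finitely many minimal primes; this is prime avoidance. Choose $a\in\fm$ outside all of them. Let $\fp$ be a prime minimal over $(a)$ and contained in $\fm$. By Krull's principal ideal theorem $\fp$ has height at most one. It is not minimal, because $a$ lies in no minimal prime, so $\fp$ has height exactly one. Hence $\fp$ is one of the $\fq_i$, which contradicts $a\notin\fq_i$. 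So $A$ has infinitely many height-one primes.

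To pass from the local statement to the global lemma I will not localize at all. I run the same argument in $R$ with a prime $\fp_2$ of height $\ge2$, which exists because $\dim R\ge2$. Suppose $R$ had finitely many height-one primes. Then $\fp_2$ is not contained in the union of those primes and the finitely many minimal primes of $R$, since each of these has height $\le1<\height\fp_2$ and so does not contain $\fp_2$. Prime avoidance then gives $a\in\fp_2$ avoiding them all. A prime of $R$ minimal over $(a)$ and contained in $\fp_2$ exists, because the primes containing $(a)$ inside $\fp_2$ form a nonempty set with minimal elements. It has height $\le1$ by Krull, is not minimal, and so has height one, yet does not contain $a$. This is a contradiction.

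The only delicate point is making sure the heights are computed in $R$ and not in a quotient. The direct global version above avoids that issue, so I expect no real obstacle. The finiteness of the set of minimal primes (Noetherian) and of the assumed height-one primes is exactly what prime avoidance needs.
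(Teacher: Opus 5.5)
Your argument is correct and is essentially the paper's: prime avoidance against the finitely many minimal primes and the supposedly finitely many height-one primes, then Krull's principal ideal theorem applied to a prime minimal over the chosen element. The only difference is that you run it directly in $R$ below a prime $\fp_2$ with $\height\fp_2\ge2$. The paper instead proves the local case and then localizes at $\fp_2$; heights are preserved under localization, so the worry in your first paragraph about passing to $R/\fp_0$ does not arise in its argument either. Your global version makes that reduction unnecessary, and your first paragraph can simply be dropped.
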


\begin{proof}
We first prove the local case. Let $(T,\fm)$ be a Noetherian local ring of dimension at least two.
If $T$ has infinitely many height-one primes there is nothing to prove, so suppose that $q_1,\dots,q_s$ are
all of them and that $p_1,\dots,p_u$ are its minimal primes. Since $\dim T\ge2$, the maximal ideal is none of
these, so by prime avoidance it contains an element $x$ outside their union, and $x\ne0$ because $0$ lies in
every minimal prime. Let $z$ be a prime minimal over $(x)$. The principal ideal theorem gives $\height z\le1$.
Since $x$ lies in no minimal prime, $z$ is not minimal, hence $\height z=1$, hence $z$ is one of the $q_i$.
But $x\in z$ and $x\notin q_i$.

For the general case, let $R$ be Noetherian of dimension at least two and $p_0\subsetneq p_1\subsetneq p_2$
a chain of primes. The localization $R_{p_2}$ has dimension at least two, and a prime of $R_{p_2}$ and
its contraction have the same height, so the height-one primes of $R_{p_2}$ are primes of $R$ of height
one; the local case gives infinitely many of them.
\end{proof}

\begin{proof}[Proof of Theorem~\ref{thm:main}, (i)$\Rightarrow$(iii)]
Let $R$ be a Noetherian ring with finitely many primes. If $\dim R\ge2$, Lemma~\ref{lem:key} gives infinitely
many primes, so $\dim R\le1$ and there is no chain of three primes. A prime that is neither minimal nor
maximal would be the middle term of such a chain, so every prime is minimal or maximal.
\end{proof}

\section{Beyond finiteness}\label{sec:infinite}

Finiteness enters Theorem~\ref{thm:main} twice, as a hypothesis on $P$ and again in condition (ii), where the
ring is required to be essentially of finite type over a field. The dimension count that rules out a chain of
three elements uses neither, so the question is which \emph{infinite} posets of height at most one are
spectra.

\begin{theorem}\label{thm:infinite}
A poset $P$ with no chain of three elements is isomorphic to $\Spec R$ for a Noetherian ring $R$ if and only
if
\begin{enumerate}[label=(N\arabic*),itemsep=1pt,topsep=2pt]
\item\label{N1} $\min P$ is finite, and
\item\label{N2} any two distinct minimal elements of $P$ have only finitely many common upper bounds.
\end{enumerate}
If these hold and $k$ is an infinite field with $|k|\ge|P|$, then $R$ can be taken reduced, and of dimension
one exactly when $P$ has an edge. For countable $P$ the field $k=\Q$ suffices.
\end{theorem}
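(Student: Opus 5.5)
Necessity and sufficiency will be handled separately.

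\emph{Necessity.} Let $R$ be Noetherian with $\Spec R\cong P$. A Noetherian ring has finitely many minimal primes, since they are the minimal primes over $0$; this gives \ref{N1}. For \ref{N2}, take distinct minimal primes $p\ne p'$ and put $I=p+p'$. Because $p$ and $p'$ are incomparable, $I$ strictly contains $p$. Every common upper bound of $p$ and $p'$ contains $I$. Since $P$ has no chain of three elements, each such bound is maximal and of height one, so it is minimal over $I$. The ring $R/I$ is Noetherian, so only finitely many primes are minimal over $I$. Hence $p$ and $p'$ have finitely many common upper bounds.

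\emph{Sufficiency.} Here I follow Theorem~\ref{thm:geometric}, now with possibly infinitely many points, and I work over an infinite field $k$ with $|k|\ge|P|$. As in Section~\ref{sec:geometric}, split off the isolated elements. They form an antichain $A_0$, and we are forced to take it finite: an infinite antichain would have infinitely many minimal elements, contradicting \ref{N1}. So $A_0$ is handled by a factor $k^{|A_0|}$.

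It remains to treat the graph part, with minimal elements $x_1,\dots,x_n$ and possibly infinite $Y$. Give each $y\in Y$ its own abscissa $a_y\in k$, with the $a_y$ distinct; this uses $|k|\ge|P|$. The plan is to choose the $g_i$ so that the values $b_y:=g_i(a_y)$ agree exactly for $i\in M(y)$. By \ref{N2}, only finitely many $y$ have $|M(y)|\ge2$. Each $y$ with $|M(y)|=1$ lies on exactly one curve, with $M(y)=\{i\}$, and we simply set $P_y=(a_y,g_i(a_y))$. It then suffices to ensure that no other curve passes through $P_y$.

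So choose the $g_i$ by interpolation at the finitely many shared points, as in Lemma~\ref{lem:data}, adding distinct multiples $c_ih$ with $h$ vanishing at those points. Distinct curves then meet in only finitely many points, namely the roots of $g_i-g_{i'}$. Adjust the abscissas of the single-curve points so that none of them is such a root. This is possible because $k$ is infinite and each excluded set is finite, but it needs a careful transfinite or cardinality argument when $|Y|=|k|$. In that case, pick the $a_y$ for the single-curve points first, avoiding the finite set of roots of all $g_i-g_{i'}$ together with the finitely many shared abscissas; there are $|k|$ choices left, so a well-ordering assignment succeeds.

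The main obstacle is that for infinite $Y$ we cannot localize at a finite union of maximal ideals, and Lemma~\ref{lem:localization} uses prime avoidance over finitely many primes. I would instead take $A=k[x,y]/(f_1\cdots f_n)$ and localize at $S=A\setminus\bigcup_y\overline{\fm_y}$. The issue is to show that a prime of $A$ contained in $\bigcup_y\overline{\fm_y}$ is contained in some $\overline{\fm_y}$, and that no unwanted maximal ideals survive; the ring must also stay Noetherian, which holds because it is a localization of a Noetherian ring. Every prime of $A$ is either some $(f_i)$ or a maximal ideal, that is, a closed point on a curve. The $(f_i)$ are each contained in some $\fm_y$, and I would check directly that a closed point $\fm$ not in the chosen set is not contained in the union. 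To do this, use the element $x-a$ when $\fm$ has abscissa $a\ne a_y$ for every $y$. When $\fm$ shares an abscissa with some $P_y$, use instead a suitable element in $\fm$ avoiding the points on the vertical line $x=a_y$, such as $y-b$ adjusted. If the chosen abscissas exhaust $k$, a closed point might fail to be avoided, so I would reserve at least one unused abscissa or work with non-rational points as needed. After this, the argument of Lemma~\ref{lem:spectrum} identifies $\Spec$ with $P$. Reducedness follows since $f_1\cdots f_n$ is squarefree, and for countable $P$ the field $\Q$ suffices because it is countable and infinite.
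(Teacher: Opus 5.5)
Your necessity argument is correct and matches the paper's. The common upper bounds of $p\ne p'$ are exactly the primes containing $p+p'$. All of them are maximal, hence minimal over $p+p'$; the paper phrases this as $R/(p_1+p_2)$ being zero-dimensional, hence Artinian.

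\textbf{The gap in sufficiency.} The sufficiency part fails at exactly the step you call the main obstacle, and the remedies you sketch do not close it. The bad points are not exceptional when $n\ge2$. For every single-curve point $P_y\in C_i$ and every $j\ne i$, the point $Q=(a_y,g_j(a_y))$ is a maximal ideal of $A$. It is not chosen, because you arranged $g_j(a_y)\ne g_i(a_y)$, and it lies on the vertical line of $P_y$, so $x-a_y$ cannot serve. You need $F\in\fm_Q$ outside \emph{all} of the infinitely many $\overline{\fm_{y'}}$, and your hints do not supply one:
\begin{itemize}
\item The element $y-g_j(a_y)$ fails as soon as some chosen point has that ordinate.
\item The element $(x-a_y)+\lambda(y-g_j(a_y))$ fails for every $\lambda$ whose line through $Q$ passes through a chosen point. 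That is one bad value of $\lambda$ per $y'\in Y$. When $|Y|=|k|$, in particular for $k=\Q$ and $P$ countable, the case the theorem singles out, nothing guarantees a good $\lambda$.
\item Reserving unused abscissas does not help, because it does not remove these points $Q$.
\end{itemize}

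Stated correctly, you need $F$ whose restrictions $u_l=F(x,g_l(x))$ vanish at the abscissa of $Q$ on the curves through $Q$. They must have no zero on the infinite sets $\Lambda_l$ of chosen abscissas on $C_l$. They must also stay compatible where the curves meet. Over $\Q$ this might be arranged with Hilbert irreducibility, making the relevant $u_l$ irreducible of degree at least two. Over an algebraically closed $k$, every nonconstant $u_l$ has roots in $k$, and you would need a real argument controlling where they fall. The theorem claims the result for every infinite $k$ with $|k|\ge|P|$. Without such a lemma, $S^{-1}A$ may keep extra maximal ideals, and $\Spec S^{-1}A\cong P$ is not established.

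\textbf{How the paper avoids this.} The paper's construction never localizes a union of several curves at infinitely many points. It handles each minimal element separately: $B_i$ is the localization of $k[x]$ at the polynomials with no zero in $\Lambda_i$. There the infinite avoidance is trivial, because a maximal ideal $(\pi)$ of $k[x]$ not of the form $(x-\lambda)$ with $\lambda\in\Lambda_i$ is generated by $\pi\in S_i$. The stars are then glued only at the finitely many points $a_t$ that (N2) permits, as the subring of $\prod_iB_i$ cut out by $f_i(a_t)=f_j(a_t)$. Noetherianity follows by induction on the number of factors, and the primes are read off from the surjective projections $R\to B_i$. To keep your geometric picture, localize each component first and glue at finitely many points, rather than localizing the union of the curves at infinitely many points.
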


The two conditions are independent, and each is needed. The poset $K_{\infty,1}$ satisfies condition~\ref{N2}
and violates condition~\ref{N1}; the poset $K_{2,\infty}$ satisfies condition~\ref{N1} and violates
condition~\ref{N2}. For finite $P$ both hold automatically, so the criterion reduces to
Theorem~\ref{thm:main}(i)$\Leftrightarrow$(iii).

\subsection*{Proof of necessity}

Condition~\ref{N1} is \cite[Lemma~10.31.6]{Stacks}: a Noetherian ring has finitely many minimal primes. For
\ref{N2}, let $p_1\ne p_2$ be minimal primes of $R$. If $p_1$ is maximal, every prime containing $p_1$ equals
$p_1$, and $p_2\not\subseteq p_1$, so the two have no common upper bound. Otherwise $(p_1+p_2)/p_1$ is a
nonzero ideal of the one-dimensional domain $R/p_1$. A prime of $R/(p_1+p_2)$ contains $p_1$, and strictly,
since it also contains $p_2$; it is therefore maximal, a chain of three elements being forbidden. So
$R/(p_1+p_2)$ is zero-dimensional, hence Artinian, hence has finitely many primes, and its primes are the
primes of $R$ that contain both $p_1$ and $p_2$.

\subsection*{Proof of sufficiency}

A poset satisfying \ref{N1} and \ref{N2} is a gluing of stars: apart from the finitely many maximal elements
that lie above more than one minimal element it is a disjoint union of stars, and those points identify
finitely many leaves. Localizations of $k[x]$ realize the stars, and gluing spectra along closed points is a
fiber product of rings, so the two operations match. The rest of this subsection carries out that construction.

Let $A_0=\min P\cap\max P$ be the set of isolated points and $P'=P\smallsetminus A_0$. Both $\min P$ and
$\min P'=\min P\smallsetminus A_0$ are finite by \ref{N1}. If $P'=\emptyset$ then $P$ is a finite antichain
and $k^{|P|}$ realizes it, so suppose $P'\ne\emptyset$ and write $\min P'=\{p_1,\dots,p_s\}$.

For a maximal element $y$ of $P'$ put $M(y)=\{p\in\min P':p\le y\}$. This set is nonempty. First, $y$ is
maximal in $P$: an element $z>y$ would not be minimal, hence would lie in $P'$, contradicting the maximality
of $y$ there. Second, $y$ is not minimal, since otherwise it would be minimal and maximal, hence isolated,
hence not in $P'$. So there is $w<y$, and $w$ is minimal, since an element strictly below $w$ would
give a chain of three elements; as $w$ is not isolated it lies in $P'$, so $w\in M(y)$. By \ref{N2} only
finitely many maximal elements satisfy $|M(y)|\ge2$: each of them is a common upper bound of some pair
$\{p_i,p_j\}$ with $i\ne j$, every pair has only finitely many, and there are finitely many pairs. Write them
as $y_1,\dots,y_T$ and put $M_t=M(y_t)$. Let $n_i$ be the number of maximal elements with $M(y)=\{p_i\}$.

Choose pairwise distinct elements of $k$: an element $a_t$ for each $t\le T$, and, for each $i$ and each of
the $n_i$ maximal elements with $M(y)=\{p_i\}$, one further element. Let $\Lambda_i$ be the set consisting of
the $a_t$ with $i\in M_t$ together with the $n_i$ elements attached to $p_i$. Then
$\Lambda_i\cap\Lambda_j=\{a_t:i,j\in M_t\}$ for $i\ne j$. The number of elements needed is
$T+n_1+\dots+n_s$, the number of maximal elements of $P'$, so $|k|\ge|P|$ allows the choices.

For each $i$ let $B_i$ be the localization of $k[x]$ at the multiplicative set $S_i$ of polynomials that
vanish at no element of $\Lambda_i$. A prime of $k[x]$ survives this localization exactly when it is
contained in $\bigcup_{\lambda\in\Lambda_i}(x-\lambda)$, which happens for the zero ideal and for the ideals
$(x-\lambda)$ with $\lambda\in\Lambda_i$ and for nothing else, an irreducible polynomial with no zero in
$\Lambda_i$ lying in $S_i$. So $B_i$ is a one-dimensional Noetherian domain whose primes are the zero ideal
and these maximal ideals, and $B_i$ contains $k[x]$. Also $\Lambda_i\ne\emptyset$: $p_i$ is not maximal in
$P'$, since otherwise it would be minimal and maximal, hence in $A_0$, so $p_i<y$ for some $y\in P'$, which is
maximal in $P'$ because a chain of three elements is forbidden, and this $y$ contributes either to $n_i$ or
to some $M_t$. In particular $0\notin S_i$, so $B_i\ne0$. Let
\[
R=\Bigl\{(f_1,\dots,f_s)\in\prod_{i=1}^{s}B_i : f_i(a_t)=f_j(a_t)\ \text{for all }t\text{ and all }
i,j\in M_t\Bigr\}.
\]
The conditions make sense because $a_t\in\Lambda_i$ for $i\in M_t$.

The ring $R$ is Noetherian. When $T=0$ there are no conditions and $R=\prod_iB_i$. In general, for a subset
$I\subseteq\{1,\dots,s\}$ put
\[
R(I)=\Bigl\{(f_i)_{i\in I}\in\prod_{i\in I}B_i : f_i(a_t)=f_j(a_t)\ \text{for all }t\text{ and all }
i,j\in M_t\cap I\Bigr\},
\]
so that $R=R(\{1,\dots,s\})$. Each $R(I)$ is Noetherian, by induction on $|I|$: for $|I|=1$ it is one of the
$B_i$, and for $|I|\ge2$ fix $i\in I$ and put $I'=I\smallsetminus\{i\}$. The projections $R(I)\to B_i$ and
$R(I)\to R(I')$ are onto, again by interpolation. The kernel of the first consists of the tuples of $R(I')$
whose $j$th entry vanishes at $a_t$ whenever $i\in M_t$ and $j\in M_t\cap I'$; this is an ideal of
$R(I')$, and its $R(I)$-submodules and its $R(I')$-submodules coincide, $R(I)\to R(I')$ being onto,
so it is a Noetherian $R(I)$-module. Also $B_i$ is a Noetherian $R(I)$-module because $R(I)\to B_i$ is
onto. An extension of two Noetherian modules is Noetherian, so $R(I)$ is Noetherian.

To list the primes, let $\pi_i\colon R\to B_i$ be the $i$th projection and let $\fq_i$ be its kernel. That projection is onto: by Lagrange
interpolation a polynomial takes prescribed values at finitely many points, so a given $f\in B_i$ extends to
a tuple satisfying the conditions. Hence $R/\fq_i\cong B_i$ and $\fq_i$ is prime. Every prime of $R$ contains
one of the $\fq_i$: if a prime $\fp$ contained none, then from $f^{(i)}\in\fq_i\smallsetminus\fp$ we would get
$\prod_if^{(i)}=0\in\fp$ with no factor in $\fp$. Hence every prime of $R$ lies above one of the $\fq_i$, and
the primes above $\fq_i$ correspond to the primes of $B_i$: to the zero ideal, which gives $\fq_i$ back, and
to the maximal ideals $(x-\lambda)$ with $\lambda\in\Lambda_i$, which give
$\fm_{i,\lambda}=\pi_i^{-1}(x-\lambda)$. The $\fq_i$ are minimal, a prime below $\fq_i$ containing some
$\fq_j$ and $\fq_j\subseteq\fq_i$ forcing $j=i$, as the next paragraph shows; the $\fm_{i,\lambda}$ are maximal, because $B_i$ has
dimension one.

It remains to check that this list has no repetitions and no inclusions beyond those forced by the equations
defining $R$. The $\fq_i$ are distinct: if $i\ne j$, the tuple with
$f_i=0$, with $f_j=\prod_{t\,:\,j\in M_t}(x-a_t)\ne0$ and with all remaining entries $0$ satisfies the
conditions and lies in $\fq_i\smallsetminus\fq_j$. The inclusion $\fq_i\subseteq\fm_{j,\lambda}$ holds
exactly when $i=j$, or when $\lambda=a_t$ and $i,j\in M_t$: in those cases
$f_i=0$ forces $f_j(\lambda)=0$, and otherwise one prescribes $f_i=0$ and $f_j(\lambda)\ne0$ and
interpolates the remaining entries. Since $f_i(a_t)=f_j(a_t)$ for every $f\in R$ when
$i,j\in M_t$, the ideals $\fm_{i,a_t}$ agree for all $i\in M_t$. No other two of the
$\fm_{i,\lambda}$ agree: if
$\fm_{i,\lambda}=\fm_{j,\lambda'}$ with $i\ne j$, then $\fq_i\subseteq\fm_{j,\lambda'}$ and
$\fq_j\subseteq\fm_{i,\lambda}$, so $\lambda'=a_t$ and $\lambda=a_u$ with $i,j\in M_t\cap M_u$, and if
$t\ne u$ the tuple with $f_i=x-a_u$, with $f_j$ taking the value $0$ at $a_u$ and $a_t-a_u$ at $a_t$, and
with the remaining entries given by interpolation, lies in $\fm_{i,a_u}$ and not in $\fm_{j,a_t}$.

Counting, $\Lambda_i$ has $|\{t:i\in M_t\}|+n_i$ elements, and the ones indexed by the $a_t$ with $i\in M_t$
are identified with those of the other members of $M_t$. So the maximal elements above $p_i$ alone number
$n_i$, and for each $t$ there is a single maximal element above exactly the members of $M_t$. Hence
$\Spec R\cong P'$.

Finally, $R$ is reduced, being a subring of $\prod_iB_i$, and $\dim R=1$ because a $\lambda\in\Lambda_i$ gives
a chain $\fq_i\subsetneq\fm_{i,\lambda}$. If $A_0\ne\emptyset$, then $R\times k^{|A_0|}$ has the same
properties and spectrum $P'\sqcup A_0=P$.

\begin{remark}\label{rem:infinite-cardinality}
The ring that Theorem~\ref{thm:infinite} builds is a subring of a finite product of localizations of $k[x]$, so for
an infinite field $k$ it has at most $|k|$ elements and hence, being Noetherian, at most $|k|$ primes. From a countable field it
therefore has only countably many. A poset satisfying \ref{N1} and \ref{N2} may nevertheless have uncountably many
maximal elements. Size is not what \ref{N2} measures; what it measures is how many maximal elements can lie above
the same pair of minimal elements.
\end{remark}

\section{How the ring constrains the poset}\label{sec:properties}

Which posets of height at most one remain when the ring is required to be normal, to be a domain, to be a unique
factorization domain, or to be local?

\begin{theorem}\label{thm:normal}
A poset $P$ with no chain of three elements is the spectrum of a Noetherian normal ring if and only if it has
finitely many minimal elements and every maximal element $y$ of $P$ has exactly one minimal element $p$ with
$p\le y$, that is, $P$ is a disjoint union of finitely many stars $K_{1,t}$, with $t$ an
arbitrary cardinal. Noetherian regular rings give the same class.
\end{theorem}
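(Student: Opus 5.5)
Necessity and sufficiency are handled separately, with regular rings appearing on the sufficiency side.

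\emph{Necessity.} Let $R$ be a Noetherian normal ring with $\Spec R\cong P$. By \ref{N1} of Theorem~\ref{thm:infinite}, or directly \cite[Lemma~10.31.6]{Stacks}, $R$ has finitely many minimal primes. The key input is the standard fact that a Noetherian normal ring is a finite product of normal domains. In the convention used here, normality means that every localization $R_\fp$ is a normal domain. Then:
\begin{itemize}[itemsep=1pt,topsep=2pt]
\item Each $R_\fp$ has a single minimal prime, so no prime contains two minimal primes.
\item Hence the minimal primes are pairwise comaximal: if $p_i+p_j$ were proper, a maximal ideal containing it would localize to a ring with two minimal primes.
\item Since $R$ is reduced, $0=\bigcap p_i=\prod p_i$, and the Chinese remainder theorem gives $R\cong\prod R/p_i$.
\end{itemize}
Even without the product decomposition, the bullet points already say that every maximal element $y$ lies above exactly one minimal element. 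At least one exists because every prime contains a minimal prime. So $P$ is a disjoint union of finitely many stars $K_{1,t}$. The clause ``$t$ an arbitrary cardinal'' is just a reformulation: each star consists of a minimal $p$ and all maximal elements above it.

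\emph{Sufficiency, with regular rings.} It suffices to realize a single star $K_{1,t}$ by a Noetherian regular domain. Finite products of regular rings are regular, and spectra of products are disjoint unions \cite[Lemma~10.21.2]{Stacks}. Regular implies normal, so this also shows that regular rings give the same class.
\begin{itemize}[itemsep=1pt,topsep=2pt]
\item For $t=0$, take a field.
\item For $1\le t\le|k|$ with $k$ an infinite field, take $B=S^{-1}k[x]$ with $S$ the polynomials vanishing at no point of a set $\Lambda$ with $|\Lambda|=t$, exactly as in the sufficiency proof of Theorem~\ref{thm:infinite}. Its primes are $0$ and the $(x-\lambda)$, $\lambda\in\Lambda$, and as a localization of a PID it is a Dedekind domain, hence regular.
\item The hard point is arbitrary $t$: the set $\Lambda$ must have cardinality $t$, so choose $k$ with $|k|\ge t$, for instance $k=\Q(\{u_\alpha\}_{\alpha<t})$, a purely transcendental extension. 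Then $k$ has at least $t$ elements and $B$ realizes $K_{1,t}$.
\end{itemize}

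\emph{Products over different fields.} Different stars in $P$ may need different fields, but nothing forces them to share one. A finite product of Noetherian regular rings over possibly different fields is still Noetherian and regular. Alternatively, one can take a single field $k$ with $|k|\ge|P|$ for all the factors.

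\emph{Main obstacle.} The delicate step is necessity: passing from ``normal'' to ``each maximal ideal contains a unique minimal prime.'' Under the local definition this is immediate, since each $R_\fm$ is a domain. If the paper intends ``integrally closed in its total ring of fractions,'' I would first use Serre's criterion, or the reducedness and idempotent argument, to reduce to the local definition. I would state that the zero ring counts as normal, consistent with the conventions of Section~\ref{sec:prelim}, and that it realizes the empty disjoint union.
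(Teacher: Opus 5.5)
Your proof is correct and follows the paper's route. Sufficiency and the regular case are the same as in the paper: a field for $t=0$, the localization of $k[x]$ at the polynomials vanishing at none of $t$ chosen points over a field with $|k|\ge t$, finite products, and regular implies normal. For necessity you read off directly from the domains $R_\fm$ that each maximal ideal contains exactly one minimal prime, where the paper instead cites the decomposition of $R$ into finitely many normal domains, each a field or a Dedekind domain; this is a harmless and slightly more elementary shortcut.

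Your closing hedge is not needed, since the paper uses the local (Stacks) definition of normality. Under the bare reading ``integrally closed in its total ring of fractions'' neither Serre's criterion nor the idempotent argument would rescue the statement, because reducedness is then not available. For example, $k[x,y]_{(x,y)}/(x^2y,xy^2)$ has $\fm$ as an associated prime, so it is its own total ring of fractions, yet its spectrum is $K_{2,1}$.
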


\begin{proof}
Suppose first that $P\cong\Spec R$ with $R$ normal. The ring $R$ is reduced and, being Noetherian, has
finitely many minimal primes
\cite[Lemma~10.31.6]{Stacks}. Since $P\cong\Spec R$ has no chain of three elements, $\dim R\le1$. By
\cite[Lemma~10.37.16]{Stacks}, a reduced ring with finitely many minimal primes is normal if and only if it is
a finite product of normal domains, so $R=\prod_iR_i$ with each $R_i$ a normal domain of dimension at most
one. Such a Noetherian normal domain is a field or a Dedekind domain \cite[Lemma~10.120.17]{Stacks}, so
$\Spec R_i$ is either a point or consists of a unique minimal prime below all its maximal ideals. Since
$\Spec R=\bigsqcup_i\Spec R_i$ with finitely many $i$, $P$ is a disjoint union of finitely many stars, and
every maximal element of $P$ has exactly one minimal element below it.

Conversely, suppose that $P$ has finitely many minimal elements and that every maximal element has exactly one
minimal element below it, so that $P$ is a disjoint union of finitely many copies of $K_{1,t}$. If
$P=\emptyset$, take $R=0$. For a copy with $t\ge1$ take the localization of $k[x]$, for a
field $k$ with $|k|\ge t$, inverting the polynomials that vanish at none of $t$ chosen distinct elements: a
principal ideal domain with exactly $t$ maximal ideals, hence a normal ring whose spectrum is $K_{1,t}$. For
$t=0$ take $\Q$. A finite product of normal rings is normal \cite[Lemma~10.37.15]{Stacks}, and the spectrum of
the product is the disjoint union. Those localizations are regular, and a finite product of regular rings is
regular, so every such poset arises from a Noetherian regular ring; conversely, a Noetherian regular ring of
dimension at most one is normal, its local rings being fields or discrete valuation rings, so no further posets
arise.
\end{proof}

A Noetherian normal ring of dimension at most one is thus a product of finitely many fields and Dedekind
domains, so its spectrum is fixed by the number of factors and, for each, whether it is a field and how many
maximal ideals it has. A domain has one factor.

\begin{theorem}\label{thm:ufd}
For a poset with no chain of three elements to be the spectrum of a Noetherian domain it is necessary and
sufficient that it have a single minimal element, that is, that it be a star $K_{1,t}$ for some cardinal $t$.
The ring can then be taken to be a one-dimensional principal ideal domain, hence a unique factorization domain,
and $\Q$ if $t=0$; Noetherian unique factorization domains give the same spectra.
\end{theorem}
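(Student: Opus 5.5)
Necessity is immediate. A domain $D$ has $(0)$ as its unique minimal prime. Every other prime contains $(0)$, and since $\Spec D$ has no chain of three elements every nonzero prime is maximal. So $\Spec D$ has a single minimal element lying below all the others, and it is the star $K_{1,t}$, where $t$ is the number of maximal ideals other than $(0)$. In particular $t=0$ exactly when $D$ is a field. Conversely, if a poset $P$ of height at most one has a single minimal element $p$, then every other element $y$ lies above some minimal element, which must be $p$. Nothing lies above $y$, since that would give a chain of three elements. Hence $P\cong K_{1,t}$ with $t=|P|-1$, a cardinal.

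For sufficiency I would construct a principal ideal domain, so the unique factorization statement comes for free. For $t=0$ take $\Q$. For $t\ge1$ choose a field $k$ with $|k|\ge t$; for instance $k=\Q$ when $t$ is countable, and a purely transcendental extension $\Q(\{u_\alpha\})$ in general. Choose a set $\Lambda\subseteq k$ with $|\Lambda|=t$, and let $B=S^{-1}k[x]$, where $S$ is the set of polynomials vanishing at no element of $\Lambda$. This is exactly the ring $B_i$ of the proof of Theorem~\ref{thm:infinite}. Its primes are those of $k[x]$ contained in $\bigcup_{\lambda\in\Lambda}(x-\lambda)$. A nonzero prime $(\pi)$ with $\pi$ irreducible lies in this union precisely when $\pi$ has a root in $\Lambda$, that is, when $\pi=x-\lambda$ up to a unit; otherwise $\pi\in S$.

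The one point needing care is that $\Lambda$ may be infinite, so prime avoidance is not available. The argument above sidesteps it: it tests each prime $(\pi)$ of $k[x]$ directly against the union, which is all that is needed to decide whether $(\pi)$ meets $S$. The only primes of $B$ are therefore $(0)$ and the $t$ distinct maximal ideals $(x-\lambda)B$, and $\Spec B\cong K_{1,t}$. Moreover $B$ is a localization of the principal ideal domain $k[x]$, hence a principal ideal domain, hence Noetherian, a unique factorization domain, and one-dimensional because $t\ge1$.

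Finally, Noetherian unique factorization domains are Noetherian domains, so by necessity they give only stars. By the construction they give every star, so the two classes of spectra coincide.
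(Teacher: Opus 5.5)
Your proposal is correct and follows the paper's proof. Necessity is the uniqueness of the minimal prime of a domain. For sufficiency the paper realizes $K_{1,t}$ with the same localization of $k[x]$ at the polynomials vanishing at none of $t$ chosen elements of a field with $|k|\ge t$ (taken from the proof of Theorem~\ref{thm:normal}), which is a principal ideal domain, and uses $\Q$ for $t=0$. Your direct check that only $(0)$ and the ideals $(x-\lambda)$ survive, without prime avoidance, is the argument the paper gives for the rings $B_i$ in the proof of Theorem~\ref{thm:infinite}.
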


\begin{proof}
A domain has exactly one minimal prime, so a spectrum of this kind has a single minimal element. Conversely,
if $P=K_{1,t}$ with $t\ge1$, the ring that the proof of Theorem~\ref{thm:normal} builds for a single star is a
principal ideal domain with $t$ maximal ideals, hence a unique factorization domain with spectrum $K_{1,t}$;
for $t=0$ take $\Q$. For finite $P$, the first assertion is also
Corollary~\ref{cor:domain}.
\end{proof}

Localizing at a single point instead of forming products leaves one maximal element, and Noetherianness then
keeps the minimal side finite.

\begin{theorem}\label{thm:local}
Among the posets with no chain of three elements, the spectra of Noetherian local rings are exactly the single
point and the finite co-stars $K_{s,1}$; equivalently, exactly the posets with a single maximal element and
finitely many minimal ones. The ring may be taken complete, so the same condition characterizes complete
Noetherian local rings.
\end{theorem}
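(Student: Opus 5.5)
Necessity is short. If $(R,\fm)$ is Noetherian local, then $\fm$ is the unique maximal element of $\Spec R$. Every prime lies in $\fm$, so every element of $P$ lies below this single maximal element. The minimal primes are finite in number by \cite[Lemma~10.31.6]{Stacks}. Since there is no chain of three elements, every prime other than $\fm$ is minimal and lies below $\fm$. If $\fm$ itself is minimal, then $P$ is a single point; otherwise $P\cong K_{s,1}$ with $s\ge1$ finite. This gives the stated equivalence with ``a single maximal element and finitely many minimal ones''. The empty poset is excluded, because a local ring is nonzero.

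Sufficiency is a construction, and I will aim directly for complete rings. For the single point I take $\Q$, which is complete. For $K_{s,1}$ with $s\ge1$ finite I take
\[
R=\Q[[x,y]]/(f_1\cdots f_s),\qquad f_i=y-c_ix ,
\]
with $c_1,\dots,c_s$ distinct rationals. This is $s$ distinct lines through the origin, but in the complete local ring rather than in a localization. It is a quotient of a complete Noetherian local ring, hence itself complete Noetherian local, and its maximal ideal is $(x,y)$.

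I then need three facts about $\Q[[x,y]]$, which is a two-dimensional regular local UFD.
\begin{enumerate}[label=(\alph*),itemsep=1pt,topsep=2pt]
\item Each $f_i$ is prime, since $\Q[[x,y]]/(y-c_ix)\cong\Q[[x]]$ is a domain. Its spectrum is $\{(0),(x)\}$, a DVR.
\item The $f_i$ are pairwise non-associate. Indeed, $(f_i)=(f_j)$ would force $c_i=c_j$ after comparing linear terms.
\item Hence the primes of $R$ are the primes of $\Q[[x,y]]$ containing some $(f_i)$, by the first paragraph of the proof of Lemma~\ref{lem:localization}.
\end{enumerate}
By (a), the primes containing $f_i$ are exactly $(f_i)$ and $(x,y)$. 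So $\Spec R$ consists of the $s$ incomparable primes $(f_i)$, which are distinct and incomparable by (b), all lying below the single maximal ideal. Thus $\Spec R\cong K_{s,1}$.

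The main point requiring care is (c) together with the description of the primes over $(f_i)$. The quotient $\Q[[x,y]]/(f_i)\cong\Q[[x]]$ is a DVR, so its only nonzero prime is the maximal ideal, and this fixes the spectrum of each component. Incomparability of the $(f_i)$ follows as in Lemma~\ref{lem:data}(iv): an inclusion between distinct height-one primes in a domain of dimension two is impossible. As an alternative check, one could note that $R$ is the completion of the ring of Theorem~\ref{thm:geometric} for $s$ lines through one point. That route is not needed, though, and it would require arguing that the lines stay irreducible analytically. The direct power-series argument above avoids this, because the lines are already linear in $y$.
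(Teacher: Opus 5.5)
Your proof is correct and is essentially the paper's: the paper takes $R=k[[x,y]]/(f_1\cdots f_s)$ with $f_i=y-g_i(x)$ for distinct $g_i\in k[[x]]$ with $g_i(0)=0$, and your lines $g_i=c_ix$ over $\Q$ are a special case of this, handled by the same UFD and DVR argument for the primes containing each $(f_i)$. Your necessity argument (a unique maximal ideal together with finitely many minimal primes) is also the paper's.
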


\begin{proof}
A local ring has a unique maximal ideal and, being Noetherian, finitely many minimal primes, so a spectrum of
this kind is a single point or $K_{s,1}$ with $s$ finite. The ring constructed below is complete, so it
suffices to realize those posets. If $P$ is a single point take a field. Otherwise $P=K_{s,1}$ with $s\ge1$ finite, and
we take $R=k[[x,y]]/(f_1\cdots f_s)$ with $f_i=y-g_i(x)$, where $g_1,\dots,g_s\in k[[x]]$ are distinct with
$g_i(0)=0$. As a quotient of a complete regular local ring, $R$ is a complete local ring. The ring $k[[x,y]]$
is a unique factorization domain, so $\prod_i f_i\in\fq$ with $\fq$ prime gives $f_i\in\fq$ for some $i$, hence
$(f_i)\subseteq\fq$; and $k[[x,y]]/(f_i)\cong k[[x]]$ is a discrete valuation ring, whose primes are $0$ and
$(x)$, the latter pulling back to $(x,y)$ because $g_i(0)=0$. The ideals $(f_i)$ are distinct: if
$(f_i)=(f_j)$ with $i\ne j$, then $f_j\in(f_i)$, but the isomorphism $k[[x,y]]/(f_i)\cong k[[x]]$ sends $f_j$ to
$g_i-g_j\ne0$. So $\Spec R=\{(f_i)\}\cup\{(x,y)\}$, which is $K_{s,1}$.
\end{proof}

So normal rings give finite disjoint unions of stars, domains a single star, and local rings a point or a finite
co-star. Beyond height one the picture is incomplete: no finite poset of height two is a Noetherian spectrum by
Theorem~\ref{thm:main}, and which infinite posets of height two are Noetherian spectra is open
\cite[\S1]{BL}. Section~\ref{sec:heighttwo} takes up the infinite case.

\section{Height two}\label{sec:heighttwo}

In height two, one obstruction is immediate. By Lemma~\ref{lem:key} a Noetherian ring of
dimension at least two has infinitely many height-one primes. Applied to $R_q/pR_q$ this says
that if $p\subsetneq q$ are primes with $\operatorname{ht}(q/p)\ge2$, then infinitely many primes $r$
satisfy $p\subsetneq r\subsetneq q$ with $\operatorname{ht}(r/p)=1$. So no interval of a
Noetherian spectrum of height at least two is finite, and every element of height at least two
has infinitely many elements below it. This much is classical. What it leaves open, and what the
rest of the section measures, is how the height-one elements are distributed over the maximal
ones.

For a poset $P$ of height exactly two with a least element $\bot$ and
$\max P=\{m_1,\dots,m_t\}$ finite and $\operatorname{ht}m_j=2$ for every $j$, and for
$\emptyset\ne S\subseteq\{1,\dots,t\}$, let $A_S$ be the set of height-one elements $x$ with
$x<m_j$ exactly for $j\in S$. The sets $A_S$ are disjoint and cover the height-one elements. The hypothesis
$\operatorname{ht}m_j=2$ is not redundant: a maximal element of height one has
$A_{\{j\}}=\emptyset$, so the conclusion of Proposition~\ref{thm:heighttwo}(ii) would fail.

The section has three steps: the criterion in the countable case
(Proposition~\ref{thm:heighttwo} and Corollary~\ref{cor:countableiff}), the cardinality
obstruction (Proposition~\ref{thm:barrier} and Corollary~\ref{cor:barrier}), and the two
consequences that measure the distance from the finite case (Corollaries~\ref{cor:cones}
and~\ref{cor:nonlocal}). Remark~\ref{rem:ww} records that the witness of
Corollary~\ref{cor:barrier} refutes Conjecture~1.4 of \cite{WW}. The two questions left
open close the section. It can be read independently of Sections~\ref{sec:infinite}
and~\ref{sec:properties}.

\begin{proposition}\label{thm:heighttwo}
Let $P$ be a poset of height exactly two with a least element $\bot$, with
$\max P=\{m_1,\dots,m_t\}$ finite and $t\ge1$, and with $\operatorname{ht}m_j=2$ for every
$j$. For $\emptyset\ne S\subseteq\{1,\dots,t\}$ let $A_S$ be the set of height-one elements
$x\in P$ with $x<m_j$ exactly for $j\in S$.
\begin{enumerate}
\item[(i)] If $P$ is countable and $A_S$ is infinite for every nonempty $S$, then
$P\cong\Spec R$ for a ring $R$ essentially of finite type over $\Q$.
\item[(ii)] If $P\cong\Spec R$ for a Noetherian ring $R$, then $A_{\{j\}}$ is infinite for
every $j=1,\dots,t$.
\end{enumerate}
\end{proposition}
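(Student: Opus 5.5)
For (ii) I would argue directly with prime avoidance and the principal ideal theorem, in the manner of Lemma~\ref{lem:key}, but keeping track of which maximal ideals contain the new prime. Let $P\cong\Spec R$ with $R$ Noetherian; then $\bot$ is the unique minimal prime, $\fm_1,\dots,\fm_t$ (corresponding to $m_1,\dots,m_t$) are all the maximal ideals, and each has height two. Fix $j$ and suppose $A_{\{j\}}$ is finite, say its elements are the primes $q_1,\dots,q_s$. The ideal $\fm_j$ is contained in none of the finitely many primes $\bot$, $q_1,\dots,q_s$, $\fm_k$ $(k\ne j)$. For $\bot$ and the $q_i$ this is because $\fm_j$ has height two; for the other maximal ideals it is by maximality. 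Prime avoidance then gives
\[
x\in\fm_j\setminus\Bigl(\bot\cup\textstyle\bigcup_i q_i\cup\bigcup_{k\ne j}\fm_k\Bigr).
\]
Let $z$ be a prime minimal over $(x)$ with $z\subseteq\fm_j$, which exists because $x\in\fm_j$. By Krull's principal ideal theorem $\height z\le1$. Since $x\notin\bot$, $z$ is not minimal, so $\height z=1$. Since $x\notin\fm_k$ for $k\ne j$, the only maximal ideal containing $z$ is $\fm_j$. So $z\in A_{\{j\}}$, yet $x\in z$ and $x\notin q_i$ for every $i$, a contradiction. Hence $A_{\{j\}}$ is infinite.

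For (i) I would realize $P$ by the semilocal ring of the plane at $t$ points, essentially as in Section~\ref{sec:geometric}. Choose distinct rational points $P_j=(a_j,b_j)$ with distinct $a_j$ and distinct $b_j$, put $\fm_j=(x-a_j,y-b_j)$, and let $R=S^{-1}\Q[x,y]$ with $S$ the complement of $\bigcup_j\fm_j$. This ring is essentially of finite type over $\Q$. By Lemma~\ref{lem:localization} with $\fa=0$ and $p_1=(0)$, $\Spec R$ is the set of primes of $\Q[x,y]$ contained in some $\fm_j$. These are $(0)$, the $\fm_j$, and the height-one primes $(f)$ with $f$ irreducible and vanishing at some $P_j$; there are no others, since $\Q[x,y]$ has dimension two and the only maximal ideal contained in $\fm_j$ is $\fm_j$. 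A prime $(f)$ lies in $\fm_j$ exactly when $f(P_j)=0$. So for nonempty $S$, the set $A_S(R)$ of height-one primes of $R$ lying below exactly the $\fm_j$ with $j\in S$ is the set of irreducible curves passing through exactly the points $P_j$, $j\in S$.

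The main step is to show that each $A_S(R)$ is countably infinite. Countability is clear, since $\Q[x,y]$ is countable. For infinitude I use the curves $y=g^0_S(x)+c\,h(x)$, where $g^0_S$ interpolates the value $b_j$ at $a_j$ for $j\in S$ and $b_j+1$ at $a_j$ for $j\notin S$, $h=\prod_j(x-a_j)$, and $c$ ranges over the nonzero rationals. By Lemma~\ref{lem:data} these polynomials are irreducible, pairwise non-associate, and vanish at exactly the points $P_j$, $j\in S$.

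Finally I match the two posets. In $P$ every height-one element lies below some $m_j$, because it is not maximal, all maximal elements having height two. So the height-one elements are the disjoint union of the sets $A_S$ over nonempty $S$. Each $A_S$ is countably infinite, being infinite by hypothesis and countable because $P$ is. Choose bijections $A_S\to A_S(R)$, send $\bot\mapsto(0)$ and $m_j\mapsto\fm_j$. In both posets the order is determined by the data "$x<m_j$ iff $j\in S$" together with $\bot$ lying below everything, so these bijections assemble into an order-isomorphism $P\cong\Spec R$. The only point needing care is that no height-one prime of $R$ escapes the count, which the description of $\Spec R$ via Lemma~\ref{lem:localization} already guarantees.
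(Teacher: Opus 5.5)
Your proposal is correct and follows the paper's proof. For (ii) it uses the same prime avoidance against the minimal prime, the other maximal ideals and the finitely many $q_i$, followed by Krull's principal ideal theorem; you keep $\bot$ in the avoidance list instead of first passing to the domain $R/\bot$, which amounts to the same thing. For (i) it uses the same semilocal ring of $\Q[x,y]$ at $t$ rational points, with infinitely many graphs $y=g(x)$ through exactly the points indexed by $S$. Your family $g^0_S+ch$ replaces the paper's $g_0+c\prod_{j\in S}(x-a_j)$, and you obtain distinctness from the localization correspondence and countability from principality of height-one primes rather than from the paper's explicit unit argument, but these differences are cosmetic.
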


\begin{proof}
For (ii), since $P$ has a least element, $R$ has a unique minimal prime $p$ and
$\Spec R=\Spec(R/p)$ as posets, heights being preserved. So $R$ may be taken to be a domain of
dimension two with maximal ideals $m_1,\dots,m_t$, each of height two. Fix $j$ and suppose that
the height-one primes lying inside $m_j$ and in no other maximal ideal are $q_1,\dots,q_s$. Apply
prime avoidance to $(0)$, the $m_{j'}$ with $j'\ne j$ and the $q_i$. None of these
contains $m_j$, so there is $f\in m_j$ outside all of them, and $f\ne0$. A prime $q$ minimal
over $(f)$ has height at most one by Krull's principal ideal theorem, and height not zero
because $f\ne0$ and $R$ is a domain, so height one. It lies in the only maximal ideal containing
$f$, namely $m_j$. So $q\in A_{\{j\}}$, and $q\ne q_i$ because $f\notin q_i$, contradicting
the enumeration $q_1,\dots,q_s$. Hence $A_{\{j\}}$ is infinite.

For (i), choose $P_j=(a_j,b_j)\in\Q^2$ with distinct $a_j$ and let $R$ be the localization of
$\Q[x,y]$ at the polynomials vanishing at none of the $P_j$. By Lemma~\ref{lem:localization}
the primes of $R$ are $(0)$, the ideals of the $P_j$, and the height-one primes through at
least one $P_j$. These last are principal, $\Q[x,y]$ being a unique factorization domain.
Interpolate $g_0$ with $g_0(a_j)=b_j$ and put, for nonempty $S$ and $c\in\Q^\times$,
\[
f_{c,S}=y-g_0(x)-c\prod_{j\in S}(x-a_j).
\]
Being monic of degree one in $y$, $f_{c,S}$ is irreducible, and $f_{c,S}(P_j)$ is zero exactly
for $j\in S$. So $(f_{c,S})$ is a height-one prime of $R$ lying below $m_j$ exactly for
$j\in S$. If $(f_{c,S})=(f_{c',S'})$, then $S=S'$, both curves passing through the same points
$P_j$. If $S=S'$ and $c\ne c'$, then $f_{c,S}$ and $f_{c',S}$ are distinct irreducibles, hence
coprime in the unique factorization domain $\Q[x,y]$. An equality $f_{c,S}=u f_{c',S}$ with
$u=a/s$ a unit of $R$ would give $s f_{c,S}=a f_{c',S}$, so that $f_{c',S}$ divides $s$. But
$f_{c',S}$ vanishes at the $P_j$ with $j\in S$, which $s$ does not. Hence $c=c'$. Each class of
$\Spec R$ contains the countably many distinct primes $(f_{c,S})$, and $\Spec R$ is countable
because $R$, a localization of the countable ring $\Q[x,y]$, is countable and Noetherian, so each of
its ideals is generated by a finite subset of $R$, and there are only countably many finite subsets. Since $P$ is countable and every $A_S$
is infinite, every class of $P$ is countably infinite, so $\Spec R\cong P$. Finally $R$ is a
localization of $\Q[x,y]$, hence essentially of finite type over $\Q$.
\end{proof}

Colbert \cite[Definition~4.1]{C18} calls a poset $U$ of height at most two a \emph{K-poset} when
$\min U$ and the set of its height-two elements are finite, when $\operatorname{mub}(u,v)$ is finite for
all distinct $u,v\in\min U$, and when for every chain $u>v>w$ of three elements the class $[u/w]$, the
set of elements whose only element above them is $u$ and whose only element below them is $w$, is
infinite. A K-poset is \emph{proper} when for every maximal $u$ and minimal $w$ the class $[u/w]$ is
empty or has cardinality $|U|$. Here the only
chains of three elements are $m_j>x>\bot$, and $[m_j/\bot]=A_{\{j\}}$, since a height-one
element has nothing above it but maximal elements and nothing below it but $\bot$. So the third
condition says exactly that the single classes are infinite, which is the conclusion of (ii).
\cite[Theorem~4.2]{C18} records that it holds in spectra of Noetherian two-dimensional rings with
finitely many height-two primes.

\begin{corollary}\label{cor:countableiff}
Let $P$ be countable, of height exactly two, with a least element $\bot$ and with
$\max P=\{m_1,\dots,m_t\}$ finite and $\operatorname{ht}m_j=2$ for every $j$. Then
$P\cong\Spec R$ for some Noetherian ring $R$ if and only if $A_{\{j\}}$ is infinite for every
$j$.
\end{corollary}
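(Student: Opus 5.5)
The necessity direction is Proposition~\ref{thm:heighttwo}(ii), applied verbatim. The hypotheses of the corollary are those of the proposition, and its conclusion is that every $A_{\{j\}}$ is infinite. The content is therefore sufficiency. The plan is to recognize $P$ as a countable proper K-poset in the sense of Colbert \cite[Definition~4.1]{C18} and then invoke Colbert's realization theorem for such posets. Proposition~\ref{thm:heighttwo}(i) does not suffice by itself: it needs every $A_S$ to be infinite, while here the classes $A_S$ with $|S|\ge2$ may be finite or even empty.

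First I would check the K-poset axioms against the discussion after Proposition~\ref{thm:heighttwo}.
\begin{enumerate}[label=(\alph*),itemsep=1pt,topsep=2pt]
\item $\min P=\{\bot\}$ is finite.
\item The height-two elements are exactly $m_1,\dots,m_t$, since $\operatorname{ht}m_j=2$ and $P$ has height two; so they are finite.
\item The condition on $\operatorname{mub}(u,v)$ for distinct minimal $u,v$ is vacuous, since there is only one minimal element.
\item The only chains of three elements are $m_j>x>\bot$, and $[m_j/\bot]=A_{\{j\}}$, which is infinite by hypothesis.
\end{enumerate}
For properness, the only pairs (maximal, minimal) are $(m_j,\bot)$, and the class $[m_j/\bot]=A_{\{j\}}$ is nonempty. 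It must therefore have cardinality $|P|$. Since $P$ is countable and $A_{\{j\}}\subseteq P$ is infinite, both are countably infinite, so $|A_{\{j\}}|=\aleph_0=|P|$. Thus $P$ is a countable proper K-poset, and Colbert's theorem gives a Noetherian ring $R$ with $\Spec R\cong P$.

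If one prefers to avoid quoting Colbert, I would attempt a direct construction in the spirit of Proposition~\ref{thm:heighttwo}(i) and the gluing of Theorem~\ref{thm:infinite}.
\begin{enumerate}[label=(\arabic*),itemsep=1pt,topsep=2pt]
\item Start from a semilocal two-dimensional domain $D$ whose height-one primes through exactly one point form countably infinite classes $A_{\{j\}}$. A natural candidate is the semilocalization of $\Q[x,y]$ at points $P_j$ lying on a common line, with the curves through two or more points removed.
\item Reinsert only the prescribed number of height-one primes through each $S$ with $|S|\ge2$, for instance the curves $(f_{c,S})$ of Proposition~\ref{thm:heighttwo}(i) for finitely many values of $c$.
\end{enumerate}

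The main obstacle is step (2) of this direct route. In any localization of $\Q[x,y]$ that keeps two points, infinitely many curves pass through both of them. So a finite $A_S$ with $|S|\ge2$ cannot be achieved by localization alone. One needs either a pullback (fiber product) construction gluing local pieces along chosen height-one primes, with Noetherianity checked as in the proof of Theorem~\ref{thm:infinite}, or a Heitmann-type completion argument. This is precisely what Colbert's theorem supplies, so the proof I would actually write is the reduction to \cite{C18} described above, with the direct construction only as a fallback.
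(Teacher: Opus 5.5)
Your proposal is correct and matches the paper's proof. Necessity comes from Proposition~\ref{thm:heighttwo}(ii), and sufficiency comes from checking that $P$ is a proper K-poset and applying \cite[Theorem~1.1]{C18}, with the key point that $[m_j/\bot]=A_{\{j\}}$ is infinite and hence, since $P$ is countable, of cardinality $|P|$; the fallback direct construction is not needed, since the paper also rests sufficiency entirely on Colbert's theorem.
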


\begin{proof}
Necessity is Proposition~\ref{thm:heighttwo}(ii). For sufficiency, $P$ is a proper K-poset:
it has height two, the sets of minimal and of height-two elements are finite, the latter because every
element of height two is maximal, an element of height two below a maximal element giving a
maximal element of height at least three. The condition on minimal upper bounds is vacuous,
$\bot$ being the only minimal element. The only pairs $(u,w)$ with $u$ maximal, $w$ minimal and
$u>w$ are the $(m_j,\bot)$, and $[m_j/\bot]=A_{\{j\}}$ is an infinite subset of the countable
set $P$, hence of cardinality $|P|$. The only chains of three elements are $m_j>x>\bot$, so the
third condition of a K-poset holds as well. Hence \cite[Theorem~1.1]{C18} applies.
\end{proof}

The criterion is the countable case, for posets whose maximal elements all have height two, of
Wiegand's classification \cite[Theorem~2]{W83}, as \cite[p.~1]{C18} records. For countable $P$ of this kind
the answer is complete. The hypothesis of
Proposition~\ref{thm:heighttwo}(i) that every class be infinite is stronger than necessary: by
Corollary~\ref{cor:countableiff} the single classes suffice, and Remark~\ref{rem:intermediate}
shows that the remaining classes may even be empty. It is retained in (i) only because the
construction there gives rings essentially of finite type over $\Q$, which the appeal to
\cite[Theorem~1.1]{C18} does not.

For $t=1$ the two halves of Proposition~\ref{thm:heighttwo} meet, and there the answer is
complete at every cardinality (Corollary~\ref{cor:onemaximal}).

\begin{corollary}\label{cor:onemaximal}
Let $P$ be a poset of height exactly two with a least element and a single maximal element. Then
$P$ is isomorphic to $\Spec R$ for some Noetherian ring $R$ if and only if the set of
height-one elements of $P$ is infinite.
\end{corollary}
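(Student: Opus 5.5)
Necessity is immediate from Proposition~\ref{thm:heighttwo}(ii) with $t=1$: if $P\cong\Spec R$ with $R$ Noetherian, then $A_{\{1\}}$ is infinite. With a single maximal element $m$, every height-one element lies below $m$, so $A_{\{1\}}$ is the set of all height-one elements. Equivalently, one can apply Lemma~\ref{lem:key} to $R_m$ after passing to $R/p$ for the unique minimal prime $p$. So the content is sufficiency at an arbitrary infinite cardinality $\kappa$ of the set of height-one elements.

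For sufficiency, $P$ is determined up to isomorphism by $\kappa$. It consists of $\bot$, the maximal element $m$, and an antichain of $\kappa$ elements of height one, each strictly between $\bot$ and $m$. No other configuration is possible: every height-one element lies below the unique maximal element and above the least element. So it suffices to produce, for each infinite cardinal $\kappa$, a two-dimensional Noetherian local domain with exactly $\kappa$ height-one primes. I would take $R=k[x,y]_{(x,y)}$ for a field $k$ with $|k|=\kappa$, for instance $k=\Q(\{u_\alpha\}_{\alpha<\kappa})$, or $\Q$ itself when $\kappa=\aleph_0$. The primes of this regular local domain of dimension two are $(0)$, the maximal ideal, and the principal primes $(f)$ with $f$ irreducible and $f(0,0)=0$. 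All the latter have height one, and every nonzero non-maximal prime is of this form by the unique factorization domain property.

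It remains to count these height-one primes. For the lower bound, the lines $y-cx$ with $c\in k$ are pairwise non-associate irreducibles vanishing at the origin, so the ring has at least $|k|=\kappa$ height-one primes. For the upper bound, $R$ has cardinality $\kappa$ since $k$ is infinite, and each ideal is finitely generated, so there are at most $\kappa$ ideals. Hence there are exactly $\kappa$ height-one primes, and $\Spec R\cong P$.

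The only delicate point is the cardinal bookkeeping. One must check that the set of finite subsets of a set of infinite cardinality $\kappa$ again has cardinality $\kappa$, which is standard, and that distinct $c$ give distinct primes in the localization. The latter holds as in the proof of Proposition~\ref{thm:heighttwo}(i): non-associate irreducibles through the origin remain non-associate after inverting elements not vanishing there. Alternatively, one could verify that $P$ is a proper K-poset, since $[m/\bot]$ has cardinality $|P|=\kappa$, and cite \cite[Theorem~1.1]{C18}. The direct construction avoids needing that theorem beyond the countable case.
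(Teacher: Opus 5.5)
Your proof is correct, and for sufficiency it takes a different route from the paper. Your necessity argument is the paper's: Proposition~\ref{thm:heighttwo}(ii) with $t=1$, after noting that $A_{\{1\}}$ is the whole middle layer.

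For sufficiency the paper argues in one line. If $A_{\{1\}}$ is infinite then $|A_{\{1\}}|=|P|$, so $P$ is a proper K-poset, and Colbert's realization theorem \cite[Theorem~1.1]{C18} applies at every cardinality. You instead observe that $P$ is determined by the cardinal $\kappa$ of its middle layer, and you realize it explicitly by $k[x,y]_{(x,y)}$ with $|k|=\kappa$. This is the $t=1$ case of the construction in Proposition~\ref{thm:heighttwo}(i), with $\Q$ replaced by a field of the right size. Both of your bounds are sound: the lines $y-cx$ give the lower bound, and counting finite generating sets gives the upper bound.

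Your route buys self-containment and a sharper conclusion. The realizing ring is a two-dimensional regular local unique factorization domain, essentially of finite type over $k$. Colbert's pasting construction carries no such guarantee, and the paper notes it need not give essentially of finite type rings.

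The paper's route buys brevity and uniformity with Corollaries~\ref{cor:countableiff} and~\ref{cor:cones}, which have several maximal elements. There your device has limited reach. Localizing $k[x,y]$ at several points makes every class $A_S$ of cardinality exactly $|k|$. So it cannot produce empty or smaller classes, such as the empty $A_{\{1,2\}}$ of the bowtie in Remark~\ref{rem:intermediate}.
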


\begin{proof}
If $A_{\{1\}}$ is infinite then $|A_{\{1\}}|=|P|$ and $P$ is a proper K-poset, as in the proof of
Corollary~\ref{cor:countableiff}, so \cite[Theorem~1.1]{C18} realizes $P$, whatever the cardinality
of $P$. If $A_{\{1\}}$ is finite, Proposition~\ref{thm:heighttwo}(ii) forbids $P$.
\end{proof}

\begin{proposition}[cardinality barrier]\label{thm:barrier}
Let $P$ be a poset of height exactly two with a least element $\bot$ and with
$\max P=\{m_1,\dots,m_t\}$ finite and $\operatorname{ht}m_j=2$ for every $j$, and let $A_S$ be as
in Proposition~\ref{thm:heighttwo}. If $P\cong\Spec R$ for a Noetherian ring $R$, then for every
nonempty $S\subseteq\{1,\dots,t\}$ and every $j\in\{1,\dots,t\}$,
\[
|A_S|\;\le\;|\Spec R|\;\le\;|R|\;\le\;|R/m_j|^{\aleph_0}\;\le\;|A_{\{j\}}|^{\aleph_0}.
\]
\end{proposition}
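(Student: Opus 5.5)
The chain has four inequalities, and I would prove them one at a time from left to right; after reducing to a domain, only the third carries real content. As in the proof of Proposition~\ref{thm:heighttwo}(ii), first replace $R$ by $R/p$ with $p$ the unique minimal prime. This changes neither $\Spec R$ nor the residue fields $R/m_j$. It can only shrink $|R|$, so for the middle inequality I would either argue directly for $R$ or note that $R$ is finite over nothing better. The cleanest route is to prove $|R|\le|R/m_j|^{\aleph_0}$ for the original $R$ using completions, as below.

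\emph{First inequality.} $A_S$ is a subset of $P\cong\Spec R$, so $|A_S|\le|\Spec R|$.

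\emph{Second inequality.} Every ideal of the Noetherian ring $R$ is generated by a finite subset. If $R$ is infinite, the number of finite subsets is $|R|$, so $|\Spec R|\le|R|$. If $R$ is finite, then $R$ is Artinian and $\Spec R$ is a finite set of maximal ideals, each determined by an element-set, so the inequality is trivial there too. In fact this case cannot occur at all here, since $\dim R=2$.

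\emph{Third inequality (the main obstacle).} Fix $j$ and put $k=R/m_j$. I would first reduce to the domain $D=R/p$, which suffices if the barrier is read for $D$; alternatively, repeat the argument with $R$ itself. The key claim is that $D$ embeds into the $m_j$-adic completion $\widehat{D_{m_j}}$. This holds because $D\to D_{m_j}$ is injective ($D$ is a domain) and $D_{m_j}\to\widehat{D_{m_j}}$ is injective by the Krull intersection theorem. The completion is the inverse limit of the rings $D_{m_j}/m_j^nD_{m_j}$. Each of these has finite length with composition factors $k$, so its cardinality is at most $\max(|k|,\aleph_0)$ and it is bounded by $|k|^{\aleph_0}$. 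The inverse limit therefore has cardinality at most $\bigl(|k|^{\aleph_0}\bigr)^{\aleph_0}=|k|^{\aleph_0}$. For $R$ itself, with $p$ possibly nonzero (nilpotent), I would instead use $R\hookrightarrow\prod_{j'}\widehat{R_{m_{j'}}}$. This map is injective because an element vanishing in every completion vanishes in every localization at a maximal ideal. That gives the bound $\max_{j'}|R/m_{j'}|^{\aleph_0}$. To get the single index $j$, I would compare residue fields through the domain picture: each $R/m_{j'}$ has cardinality at most $|A_{\{j'\}}|$, and I will check whether the paper intends the barrier with that uniformity or simply for the domain $R$ after the reduction. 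The cleanest version works in the domain throughout, where injectivity into a single completion holds.

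\emph{Fourth inequality.} It suffices to show $|k|\le|A_{\{j\}}|$, with $k=R/m_j$ infinite or finite. Consider the height-one primes $(f-\lambda g)$-type elements: choose $f\in m_j$ avoiding, by prime avoidance, the other $m_{j'}$ and $(0)$, as in Proposition~\ref{thm:heighttwo}(ii). For each $u\in R$ representing a class of $k$, the element $f+u f^2$ is also nonzero, lies in $m_j$, and avoids the $m_{j'}$. A minimal prime $q_u$ over it therefore lies in $A_{\{j\}}$. Each height-one prime $q$ contains only boundedly many such elements modulo the appropriate equivalence: if $f+uf^2,f+vf^2\in q$, then $(u-v)f^2\in q$. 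Then $f\in q$ (impossible for suitable choices) or $u-v\in q\subseteq m_j$. So distinct residues $u$ give distinct primes, which yields $|k|\le|A_{\{j\}}|$. Applying $(\cdot)^{\aleph_0}$ finishes the chain. The delicate point is arranging that $f\notin q_u$. If that fails, I would replace this family by $g_u=f-u$ for units... and otherwise fall back on Proposition~\ref{thm:heighttwo}(ii) together with the infinitude $|A_{\{j\}}|\ge\aleph_0$, combined with a Krull-style counting of primes of $D/fD$.
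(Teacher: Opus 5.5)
Your first two inequalities are fine, and so is your completion bound $|D|\le|k|^{\aleph_0}$ for the domain $D=R/p$; the paper completes $D$ at $m_j$ directly, which amounts to the same thing. The genuine gap is the fourth inequality. The family $f+uf^2=f(1+uf)$ yields only finitely many primes. Since $uf\in m_j$, the factor $1+uf$ lies outside $m_j$. So every prime $q\subseteq m_j$ containing $f+uf^2$ contains $f$, and a minimal prime of $(f+uf^2)$ inside $m_j$ is a minimal prime of $(f)$. Thus every $q_u$ is one of the finitely many minimal primes of $(f)$, and the alternative ``$f\in q$'' in your injectivity step is never excluded: it always occurs. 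Moreover $1+uf$ can fall into some $m_{j'}$, so avoidance of the other maximal ideals is not automatic either.

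Your fallbacks do not repair this. An element $f-u$ with $u$ a unit is not in $m_j$ at all. The infinitude of $A_{\{j\}}$ only gives $|A_{\{j\}}|^{\aleph_0}\ge2^{\aleph_0}$, which covers $|k|\le2^{\aleph_0}$ but not a larger residue field. Ruling out a large residue field is exactly what Corollary~\ref{cor:barrier} needs.

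The missing idea is to perturb $f$ by an element independent of it. Take $0\ne g\in\bigcap_i m_i$ and let $Q_1,\dots,Q_s$ be the minimal primes of $(g)$, all of height one. By prime avoidance choose $f\in m_j$ outside the $m_i$ with $i\ne j$ and outside every $Q_l$; this is possible since $\operatorname{ht}Q_l=1<2$. Put $w_z=f+zg$ for $z$ in a set of representatives of $k$. Then $w_z\ne0$, since otherwise $f\in Q_1$. Also $w_z\notin m_i$ for $i\ne j$, because $g\in m_i$ and $f\notin m_i$. Hence a minimal prime $q_z\subseteq m_j$ of $(w_z)$ has height one and lies in $A_{\{j\}}$. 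For injectivity, suppose $q_z=q_{z'}$ with $z\ne z'$. Then $(z-z')g\in q_z$ and $z-z'\notin m_j$, so $g\in q_z$ and $q_z=Q_l$ for some $l$. Hence $f=w_z-zg\in Q_l$, a contradiction.

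There is also a smaller gap: you leave the third inequality unresolved for the original, possibly non-reduced $R$. Comparing residue fields through the $A_{\{j'\}}$ would not give a bound in terms of the single index $j$. The paper's fix uses that $p$ is nilpotent. The finite filtration $R\supseteq p\supseteq p^2\supseteq\cdots\supseteq p^N=0$ then has finitely generated $D$-modules as quotients, so $|R|\le|D|^{\aleph_0}\le|k|^{\aleph_0}$. Your own route through $R\hookrightarrow\prod_{j'}\widehat{R_{m_{j'}}}$ also closes, once you note that each $R/m_{j'}=D/m_{j'}$ is a quotient of $D$. The domain bound then gives $|R/m_{j'}|^{\aleph_0}\le\bigl(|k|^{\aleph_0}\bigr)^{\aleph_0}=|k|^{\aleph_0}$ for every $j'$.
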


\begin{proof}
Since $P$ has a least element, $R$ has a unique minimal prime $p$ and $\Spec(R/p)=\Spec R$ as posets,
heights being preserved; so $\bar R=R/p$ is a domain of dimension two whose maximal ideals we again
call $m_1,\dots,m_t$, each of height two, and $R/m_j\cong\bar R/m_j$. The prime $p$ is nilpotent, and
the successive quotients of the filtration $0=p^n\subseteq\cdots\subseteq p\subseteq R$ are finitely
generated $\bar R$-modules, so $|R|\le|\bar R|^{\aleph_0}$. It therefore suffices to prove the chain for
the domain $\bar R$, in which the classes $A_S$ are the same sets.

\emph{First inequality.} $A_S$ consists of primes of $\bar R$.

\emph{Second inequality.} Every ideal of the Noetherian ring $\bar R$ is generated by a finite subset,
so the ideals are the images of the finite subsets of $\bar R$; as $\bar R$ is infinite here, there are
at most $|\bar R|$ of them.

\emph{Third inequality.} By Krull's intersection theorem $\bigcap_{n\ge1}m_j^n=(0)$, so $\bar R$ embeds
in the completion $\hat R=\varprojlim_n\bar R/m_j^n$. Each $\bar R/m_j^n$ is a local Artinian ring whose
only prime is $m_j/m_j^n$, so all its composition factors are $k_j:=\bar R/m_j$ and its cardinality is
$|k_j|^{\ell_n}$, with $\ell_n$ the length. Hence
\[
|\bar R|\;\le\;|\hat R|\;\le\;\prod_{n\ge1}|\bar R/m_j^n|\;=\;|k_j|^{\sum_n\ell_n}\;\le\;|k_j|^{\aleph_0},
\]
the last step because every $\ell_n$ is finite and a countable sum of finite numbers is at most $\aleph_0$.

\emph{Fourth inequality.} It suffices to inject $k_j$ into $A_{\{j\}}$. Choose
$0\ne g\in\bigcap_{i=1}^tm_i$, possible because $\bar R$ is a domain and $t$ is finite, and let
$Q_1,\dots,Q_s$ be the minimal primes of $(g)$, each of height one by Krull's principal ideal theorem.
Since $M:=m_j$ is maximal and distinct from the $m_i$ with $i\ne j$, and
$\operatorname{ht}Q_l=1<2=\operatorname{ht}M$, prime avoidance gives
$f\in M\setminus\bigl(\bigcup_{i\ne j}m_i\cup\bigcup_{l=1}^sQ_l\bigr)$. Fix a set $Z\subseteq\bar R$ of
representatives of $k_j$ and put $w_z=f+zg$ for $z\in Z$. Then $w_z\in M$ and $w_z\ne0$, since
$f=-zg\in(g)\subseteq Q_1$ would otherwise contradict the choice of $f$. Let $q_z$ be a minimal prime of
$(w_z)$; one lies in $M$, because $(w_z)\subseteq M$ and $M$ is prime, and it has height one:
$\operatorname{ht}q_z\le1$ by Krull's principal ideal theorem, and $\operatorname{ht}q_z\ne0$ because
$w_z\ne0$ and $\bar R$ is a domain. Moreover $q_z$ lies in no maximal ideal other than $M$: if
$q_z\subseteq m_i$ with $i\ne j$, then $w_z\in m_i$, whereas $f\notin m_i$ and $zg\in m_i$. So
$q_z\in A_{\{j\}}$, and $z\mapsto q_z$ is injective: if $q_z=q_{z'}=q$ with $z\ne z'$, then
$(z-z')g\in q$, while $z-z'\notin M\supseteq q$, so $g\in q$ because $q$ is prime, whence $q$ is a
height-one prime containing $g$, that is $q=Q_l$ for some $l$, and then $f=w_z-zg\in Q_l$, again contrary
to the choice of $f$. Hence $|A_{\{j\}}|\ge|Z|=|k_j|$. Since
$|\bar R|\le|R|\le|\bar R|^{\aleph_0}\le(|k_j|^{\aleph_0})^{\aleph_0}=|k_j|^{\aleph_0}$ and the classes
are the same sets, the chain holds for $R$.
\end{proof}

The chain is \cite[Lemma~8.2]{C18}, stated there for a single maximal ideal $M$ of height two in a
semilocal Noetherian domain of dimension two, with the last step of that lemma attributed to \cite[Lemma~2.1]{KO};
the proof above reproduces it in the form needed here, $S$ being arbitrary and $j$ not required to lie
in $S$. What is new, as far as we know, is Corollary~\ref{cor:barrier} and the use made of the chain,
not the chain.

\begin{corollary}\label{cor:barrier}
There is a poset $Q$ of height exactly two, with a least element, with two maximal elements of
height two and with every class $A_S$ infinite, which is not isomorphic to $\Spec R$ for any
Noetherian ring $R$.
\end{corollary}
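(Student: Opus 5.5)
The plan is to build $Q$ so that it violates the cardinality barrier of Proposition~\ref{thm:barrier} while keeping every class infinite. Take $t=2$, so the maximal elements are $m_1,m_2$ and the classes are $A_{\{1\}}$, $A_{\{2\}}$ and $A_{\{1,2\}}$. Let $A_{\{1\}}$ and $A_{\{2\}}$ be countably infinite. Let $A_{\{1,2\}}$ be a set of cardinality $\kappa>2^{\aleph_0}$, for instance $\kappa=(2^{\aleph_0})^+$ or simply $2^{2^{\aleph_0}}$. The poset $Q$ consists of $\bot$, the height-one elements $\bigsqcup_SA_S$, and $m_1,m_2$. The order is $\bot<x$ for every height-one $x$, $\bot<m_j$, and $x<m_j$ exactly when $j\in S$ for $x\in A_S$. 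There are no further relations.

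First I check that $Q$ satisfies the hypotheses. It has height exactly two, since the chains $\bot<x<m_j$ exist and nothing is longer. It has the least element $\bot$. Both $m_j$ have height two, because each has some element of $A_{\{j\}}$ below it. Every class $A_S$ is infinite by construction.

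Next, suppose $Q\cong\Spec R$ with $R$ Noetherian. Proposition~\ref{thm:barrier} with $S=\{1,2\}$ and $j=1$ gives
\[
\kappa=|A_{\{1,2\}}|\le|A_{\{1\}}|^{\aleph_0}=\aleph_0^{\aleph_0}=2^{\aleph_0},
\]
which contradicts the choice $\kappa>2^{\aleph_0}$. So no such $R$ exists.

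There is no real obstacle here, since all the work sits in Proposition~\ref{thm:barrier}. The only points needing care are that the hypotheses of that proposition are exactly those verified above: finite $\max Q$, each $\operatorname{ht}m_j=2$, and a least element. One should also note that the inequality is applied with $j\notin S$ not required, which the proposition explicitly allows.

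As a sanity remark, the construction also shows that infinitude of the single classes, which is Colbert's third K-poset condition and by Corollary~\ref{cor:countableiff} the right criterion in the countable case, is not sufficient in general. What fails is only the properness condition on cardinalities.
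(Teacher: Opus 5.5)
Your proof is correct and is essentially the paper's: the paper also takes $t=2$, makes one class of cardinality $(2^{\aleph_0})^+$ and the other two countably infinite, and contradicts Proposition~\ref{thm:barrier}. The only difference is the choice of which class is large. The paper enlarges $A_{\{2\}}$ and applies the chain with $S=\{2\}$ and $j=1\notin S$, so its witness even satisfies $|A_S|\le|A_{\{j\}}|$ whenever $j\in S$. You enlarge $A_{\{1,2\}}$ and use $j=1\in S$, which makes your closing remark about $j\notin S$ unnecessary, though harmless.
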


\begin{proof}
Let $Q$ be the poset with a least element $\bot$, two maximal elements $m_1,m_2$ of height two,
classes $A_{\{1\}}$ and $A_{\{1,2\}}$ countably infinite and $A_{\{2\}}$ of cardinality
$(2^{\aleph_0})^+$, with $\bot$ below every element and $m_j$ above the elements of $A_S$
exactly when $j\in S$. Then every $A_S$ is infinite and $|Q|=(2^{\aleph_0})^+$. If
$Q\cong\Spec R$, Proposition~\ref{thm:barrier} with $S=\{2\}$ and $j=1$ gives
$(2^{\aleph_0})^+=|A_{\{2\}}|\le|A_{\{1\}}|^{\aleph_0}=2^{\aleph_0}$, contrary to
$(2^{\aleph_0})^+>2^{\aleph_0}$.
\end{proof}

The witness $Q$ is a K-poset, its only chains of three elements being $m_j>x>\bot$, and for
these $[m_j/\bot]=A_{\{j\}}$ is infinite. But $Q$ is not proper, $[m_1/\bot]=A_{\{1\}}$ having
$\aleph_0$ elements while $|Q|=(2^{\aleph_0})^+$. It also satisfies $|A_S|\le|A_{\{j\}}|$
whenever $j\in S$, so no condition of that shape would rule it out. Hence the properness
hypothesis of \cite[Theorem~1.1]{C18} cannot be deleted, and neither can the countability
hypothesis of Proposition~\ref{thm:heighttwo}(i).

\begin{remark}\label{rem:ww}
Conjecture~1.4 of \cite{WW} asserts that a two-dimensional poset with finite $\min$ and finite $\max$,
dimension here being the supremum of the heights of the elements, is isomorphic to the spectrum of a
Noetherian ring if and only if $L_e(u)$ is infinite for every $u$ of height two and
$\operatorname{mub}(u,v)$ is finite for all $u,v\in\min$, where $L_e(u)$ is the set of elements whose
only strict upper bound is $u$ and $\operatorname{mub}$ the set of minimal upper bounds. The witness
$Q$ of Corollary~\ref{cor:barrier} satisfies both conditions: $L_e(m_j)=A_{\{j\}}$ is infinite for
$j=1,2$, and $\operatorname{mub}(\bot,\bot)=\min(Q)=\{\bot\}$ is finite, $\bot$ being the only minimal
element. Yet $Q$ is not the spectrum of a Noetherian ring, so the conjecture fails at $Q$. What excludes
$Q$ is the chain of Proposition~\ref{thm:barrier}, which neither condition sees; when $P$ is countable, has a least element and finitely many
maximal elements, all of height two, the conjecture reduces to Corollary~\ref{cor:countableiff}.
\end{remark}

\begin{remark}\label{rem:intermediate}
The classes $A_S$ with $|S|\ge2$ need not be infinite. Let $B$ be the poset with a least
element, two maximal elements $m_1,m_2$ of height two, and two countably infinite classes
$A_{\{1\}}$ and $A_{\{2\}}$, so that $A_{\{1,2\}}=\emptyset$. Call it the \emph{bowtie}. The
bowtie is a proper K-poset, since the sets of minimal and of height-two elements are finite, the
condition on minimal upper
bounds is vacuous, and $[m_j/\bot]=A_{\{j\}}$ has cardinality $|B|$. So
\cite[Theorem~1.1]{C18} realizes it. Earlier examples of two
height-two maximal ideals with no common height-one prime are due to Heitmann
\cite[Example~2]{He77} and McAdam \cite{Mc76}.
\end{remark}

\begin{corollary}\label{cor:cones}
Let $t\ge1$ and let $A$ be an infinite antichain whose elements all lie below every $m_j$, with
$\bot$ below everything. Then $P=\{\bot\}\cup A\cup\{m_1,\dots,m_t\}$ is a Noetherian spectrum
if and only if $t=1$.
\end{corollary}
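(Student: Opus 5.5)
The plan is to translate the poset into the classes $A_S$ of Proposition~\ref{thm:heighttwo} and treat the two directions separately. First I check the hypotheses. $P$ has height exactly two, a least element $\bot$, and maximal elements $m_1,\dots,m_t$, each of height two because $\bot<a<m_j$ for any $a\in A$. Every element of $A$ lies below every $m_j$, so the only nonempty class is $A_{\{1,\dots,t\}}=A$, and $A_S=\emptyset$ for every other nonempty $S$.

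For the direction ``$t=1$ implies a Noetherian spectrum'': here $P$ has a least element and a single maximal element, and its height-one elements form the infinite set $A$. Corollary~\ref{cor:onemaximal} then realizes $P$ at every cardinality of $A$, with no countability needed.

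For the direction ``Noetherian spectrum implies $t=1$'': suppose $t\ge2$ and $P\cong\Spec R$ with $R$ Noetherian. Proposition~\ref{thm:heighttwo}(ii) makes $A_{\{j\}}$ infinite for every $j$. But $A_{\{1\}}=\emptyset$, since every element of $A$ also lies below $m_2$. This is a contradiction, so $t=1$. The argument of (ii) is exactly what yields the contradiction: prime avoidance gives $f\in m_1$ outside $m_2,\dots,m_t$, and a minimal prime over $(f)$ is a height-one prime lying only under $m_1$.

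I expect no real obstacle. The only care needed is to confirm that the hypothesis $\operatorname{ht}m_j=2$ of Proposition~\ref{thm:heighttwo} holds, and that ``height one'' in $P$ means exactly the elements of $A$. Both follow because $P$ is, as a set, just $\{\bot\}\cup A\cup\{m_1,\dots,m_t\}$.
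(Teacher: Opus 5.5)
Your proof is correct and matches the paper's: necessity is Proposition~\ref{thm:heighttwo}(ii) applied to $A_{\{j\}}=\emptyset$ for $t\ge2$, and sufficiency comes from Colbert's realization theorem. In your side sketch, prime avoidance should also exclude the minimal prime, so that a minimal prime over $(f)$ is not $\bot$; citing Corollary~\ref{cor:onemaximal} for $t=1$ is just the packaged form of the paper's remark that $P$ is then a proper K-poset, so \cite[Theorem~1.1]{C18} applies.
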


\begin{proof}
For $t\ge2$ no height-one element lies below exactly one maximal element, so
$A_{\{j\}}=\emptyset$ for every $j$ and Proposition~\ref{thm:heighttwo}(ii) forbids $P$. For
$t=1$ the set $A_{\{1\}}=A$ is the only class, and $|A|=|P|$. So $P$ is a proper K-poset, as
in the proof of Corollary~\ref{cor:countableiff}, and \cite[Theorem~1.1]{C18} realizes it.
\end{proof}

\begin{corollary}\label{cor:nonlocal}
There are posets $P_1,P_2$ of height two with the same multiset of interval isomorphism types,
such that $P_2$ is a Noetherian spectrum, $P_1$ is not, and every interval of $P_1$ is a
Noetherian spectrum.
\end{corollary}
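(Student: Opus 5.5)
The plan is to take the pair from examples already in the paper: the cone of Corollary~\ref{cor:cones} with $t=2$ for $P_1$, and a poset realized by Colbert's theorem for $P_2$. Let $P_1=\{\bot\}\cup A\cup\{m_1,m_2\}$ with $A$ a countably infinite antichain lying below both $m_1$ and $m_2$. Corollary~\ref{cor:cones} says $P_1$ is not a Noetherian spectrum, because $A_{\{1\}}=A_{\{2\}}=\emptyset$.

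For $P_2$ take the same underlying set $\{\bot\}\cup A\cup\{m_1,m_2\}$. Split $A=A'\sqcup A''$ into two countably infinite halves, and put $A'$ below $m_1$ only and $A''$ below $m_2$ only. This is the bowtie $B$ of Remark~\ref{rem:intermediate}, which that remark realizes via \cite[Theorem~1.1]{C18}. So $P_2$ is a Noetherian spectrum.

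Next I compare the interval types, meaning the intervals $[u,v]$ for $u\le v$. Both posets have:
\begin{itemize}
\item four one-element intervals from the elements themselves, plus countably many from $A$;
\item countably many two-element chains $[\bot,a]$;
\item countably many two-element chains $[a,m_j]$: in $P_1$ each $a$ gives two of them, in $P_2$ each gives one, and both totals are $\aleph_0$;
\item the two intervals $[\bot,m_j]$.
\end{itemize}
The multisets therefore agree provided the intervals $[\bot,m_j]$ agree. In $P_1$ each $[\bot,m_j]=\{\bot\}\cup A\cup\{m_j\}$ is a countable cone. In $P_2$, $[\bot,m_1]=\{\bot\}\cup A'\cup\{m_1\}$ is also a countable cone with an infinite middle. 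These are isomorphic, since $|A'|=|A|=\aleph_0$. Multiplicities count with cardinals, so the multisets coincide.

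Finally I check that every interval of $P_1$ is a Noetherian spectrum:
\begin{itemize}
\item the one-point intervals are fields;
\item the two-element chains are discrete valuation rings, as in Theorem~\ref{thm:ufd} with $t=1$;
\item the cones $[\bot,m_j]$ have a single maximal element and infinitely many height-one elements, so Corollary~\ref{cor:onemaximal} applies (equivalently Corollary~\ref{cor:cones} with $t=1$).
\end{itemize}

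The main obstacle is pinning down what "multiset of interval isomorphism types" means. If only intervals $[u,v]$ with $u<v$ are counted, the argument is the same. If multiplicities are taken with cardinals, the count of the $[a,m_j]$ differs between $P_1$ and $P_2$ only in its finite structure and both totals are $\aleph_0$, so the match holds as stated.
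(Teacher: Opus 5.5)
Your proof is correct and has the same skeleton as the paper's. You use the same $P_1$ (Corollary~\ref{cor:cones} with $t=2$), the same census $(\aleph_0,\aleph_0,2)$ of points, two-element chains and double cones with countably infinite middle, and the same reasons why $P_1$ fails while each of its intervals is a spectrum. Where you differ is the choice of $P_2$. The paper splits the middle layer into three countably infinite classes $A_{\{1\}},A_{\{2\}},A_{\{1,2\}}$, while you take the bowtie, with $A_{\{1,2\}}=\emptyset$. Both satisfy Corollary~\ref{cor:countableiff}, so both work.

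The paper's choice buys something, though. With every class infinite, its $P_2$ is also realized by the explicit localization of $\Q[x,y]$ at two points in Proposition~\ref{thm:heighttwo}(i), and the double cones likewise by that proposition with $t=1$. So the corollary can be proved without \cite[Theorem~1.1]{C18}, using rings essentially of finite type over $\Q$. The bowtie falls outside that construction, and within the paper its realization rests on Colbert's theorem (Remark~\ref{rem:intermediate}). Your choice has the merit of a sharper contrast: every height-one element lies below both maximal elements in $P_1$ and below exactly one in $P_2$, yet the census is unchanged.

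A trivial slip: outside $A$ there are three one-point intervals ($\bot,m_1,m_2$), not four. It does not affect the count.
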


\begin{proof}
Take for $P_1$ the poset of Corollary~\ref{cor:cones} with $t=2$ and $A$ countably infinite,
and for $P_2$ the poset obtained by splitting its middle layer into three countably infinite
classes. Every interval of either poset is a point, a two-element chain, or a double cone
$\{\bot\}\cup D\cup\{m_j\}$ with $D$ countably infinite, and the double cones are spectra by
Corollary~\ref{cor:cones}. The two multisets of interval types are both
$(\aleph_0,\aleph_0,2)$, counting points, two-element chains and double cones. So $P_2$ is a
spectrum by Corollary~\ref{cor:countableiff}, while $P_1$ is not, by
Corollary~\ref{cor:cones}.
\end{proof}

So realizability in height two is not a function of the multiset of interval isomorphism types:
no criterion phrased in terms of that multiset alone can be correct, in contrast with the finite
case of Theorem~\ref{thm:main}.

Two questions are left open. First, whether the last inequality of Proposition~\ref{thm:barrier}
is sufficient: is a poset of height two with a least element, finitely many maximal elements, all
of height two, and with every $A_{\{j\}}$ infinite a spectrum whenever
$|A_S|\le|A_{\{j\}}|^{\aleph_0}$ for every nonempty $S\subseteq\{1,\dots,t\}$ and every $j\in\{1,\dots,t\}$? By
Corollary~\ref{cor:countableiff} a counterexample has at least $\aleph_1$ elements, and the
bowtie of Remark~\ref{rem:intermediate}, with $A_{\{2\}}$ enlarged to $\aleph_1$, has exactly
$\aleph_1$ elements and satisfies $|A_S|\le|A_{\{j\}}|^{\aleph_0}$ for every nonempty $S\subseteq\{1,\dots,t\}$ and every $j\in\{1,\dots,t\}$. Second, whether the exponent in Proposition~\ref{thm:barrier} is needed: if this poset
is a spectrum, then
$|A_S|\le|A_{\{j\}}|$ fails for $S=\{2\}$ and $j=1\notin S$, so the exponent cannot be
removed. If it is not a spectrum, the inequalities are not sufficient. Either outcome settles
one of the two questions. We do not know, however, whether the enlarged bowtie is a Noetherian
spectrum. It is not proper: the class $[m_1/\bot]=A_{\{1\}}$ has $\aleph_0$ elements while the
whole poset has $\aleph_1$, so \cite[Theorem~1.1]{C18} does not apply.

Section~\ref{sec:relation} relates these results to the literature.

\section{Relation to earlier work}\label{sec:relation}

The results of \cite{A} are the instances of Theorem~\ref{thm:main} in which the graph is complete bipartite,
a path, an even cycle, a disjoint union of these, or one of the two families added in Examples~5.1
and~5.2 of that paper: for $K_{m,n}$ every neighborhood is full, for a path the neighborhoods are
$\{1\},\{1,2\},\{2,3\},\dots$, and for an even cycle they are $\{1,n\}$ and $\{i-1,i\}$.

The distinction between a graph and a poset matters here. The introduction of \cite{A} says that the Hasse
diagram of every finite Noetherian spectrum is bipartite. That is true and it is not a characterization: the
diamond $\{a<b,\ a<c,\ b<d,\ c<d\}$ has the same bipartite Hasse diagram $C_4$ as $K_{2,2}$, and only the
second is a Noetherian spectrum. What excludes the diamond, and the three-element chain, is a chain of three
elements; what Theorem~\ref{thm:main} supplies is that this one condition is also sufficient. The closing
paragraph of \cite{A} anticipates the characterization, provided its graph is read in this sense.

The implication (i)$\Rightarrow$(iii) is classical: it is \cite[Lemma~10.61.2]{Stacks}, quoted as a well-known
property of Noetherian rings in \cite[\S1]{BL}. Theorem~\ref{thm:infinite} is in substance Colbert's: its two
conditions are, for posets of height at most one, the first two in his definition of a K-poset
\cite[Definition~4.1]{C18}; see also \cite{WW}. A K-poset of height at
most one is automatically proper, \cite[Theorem~1.1]{C18} realizes every proper K-poset, and the conditions hold
in every Noetherian spectrum of dimension at most one, as Section~\ref{sec:infinite} shows. The explicit
construction of (iii)$\Rightarrow$(ii), and with it the essentially of finite type form of clause (ii) of
Theorem~\ref{thm:main} that settles Conjecture~5.3 of \cite{A}, is what this paper contributes: Colbert's proof
is written for posets of dimension two --- its necessity reduces to rings whose maximal ideals have height two
--- and its construction, which pastes localizations of polynomial rings in possibly infinitely many
indeterminates, does not give essentially of finite type rings in general.

Section~\ref{sec:heighttwo} reaches less far. The chain of Proposition~\ref{thm:barrier} is
\cite[Lemma~8.2]{C18}, whose proof Section~\ref{sec:heighttwo} reproduces in the form needed here: the
trivial inequality $|A_S|\le|\Spec R|$ is put in front, the $\aleph_0$ term of that lemma is dropped, and
its first inequality, raised to the $\aleph_0$, is used for the last step. In the
countable case with a single minimal element and finitely many maximal elements, all of height two, the
classification is Wiegand's \cite[Theorem~2]{W83}. What the section adds is
the elementary construction of Proposition~\ref{thm:heighttwo}(i), the counterexample of
Corollary~\ref{cor:barrier} --- a K-poset with every class infinite that would violate the chain if it were a
spectrum, so that neither the countability hypothesis of Proposition~\ref{thm:heighttwo}(i) nor the properness
hypothesis of \cite[Theorem~1.1]{C18} can be dropped --- and the observation of
Corollary~\ref{cor:nonlocal} that the multiset of interval isomorphism types does not determine
realizability. We have not found this observation in the literature.

The closest results are embedding theorems. Every finite poset embeds in the spectrum of a Noetherian ring so
that saturated chains are preserved \cite{He79}, and in the spectrum of a Noetherian unique factorization
domain \cite{CL}, and \cite{BL} characterize when such an embedding can preserve the dimension as well. In these
theorems the spectrum is in general strictly larger than the poset; for the unique factorization domain
versions, by Theorem~\ref{thm:ufd} the two can coincide only when the poset has a single minimal element, so
they cannot be sharpened to equalities in general.
Nor have we found in the literature the poset forms of Theorems~\ref{thm:normal}
and~\ref{thm:local}, or the criterion of Theorem~\ref{thm:moduli} deciding which families of
neighborhoods a given choice of moduli allows. The survey \cite{WW} describes the surrounding questions; its Conjecture~1.4 is refuted by the witness
of Corollary~\ref{cor:barrier} (Remark~\ref{rem:ww}), a counterexample we have not found recorded. Heitmann's earlier note \cite{He77} pastes Noetherian $G$-rings together so that the nonzero primes
of the result are the disjoint union of the nonzero primes of the pieces. Its Example~2 is an earlier
example of two height-two maximal ideals with no common height-one prime, cited in
Section~\ref{sec:heighttwo}.

Without the Noetherian hypothesis nothing of this survives: by \cite{H} the spectral spaces are exactly the
spectra of rings, so every finite poset with the Alexandrov topology is a spectrum, and \cite{Stacks} gives a finite spectrum of dimension greater than one. The Noetherian hypothesis is what
forces the constraint of Theorem~\ref{thm:main}.

\section*{Declaration of generative AI and AI-assisted technologies in the manuscript preparation process}

During the preparation of this work the author used DeepSeek V4.1 Flash to identify key literature, to
improve the readability and organization of the manuscript, and to check some details of proofs and
calculations. After using this tool, the author reviewed and edited the content as needed and takes full
responsibility for the verification and the exposition of this article.

\end{document}